\crefname{equation}{}{}
\crefname{lem}{Lemma}{Lemmas}
\crefname{thm}{Theorem}{Theorems}
\newcommand{\id}{\,{\rm Id}}
\newcommand{\dd}{\,{\rm d}}
\newcommand{\R}{\,{\mathbb R}}
\newcommand{\dual}[1]{\left\langle {#1} \right\rangle}
\newcommand{\prox}[0]{ {\bf prox}}
\newcommand{\proxi}[0]{ {\bf prox}}
\newcommand{\argmin}[0]{ {\mathop{{\rm  argmin}}\,}}
\newcommand{\argmax}[0]{ {\mathop{{\rm  argmax}}\,}}
\newcommand{\nm}[1]{\left\lVert {#1} \right\rVert}
\newcommand{\snm}[1]{\left\lvert {#1} \right\rvert}
\newcommand{\ssnm}[1]
{
	\left\vert\kern-0.25ex
	\left\vert\kern-0.25ex
	\left\vert
	{#1}
	\right\vert\kern-0.25ex
	\right\vert\kern-0.25ex
	\right\vert
}
\def\spher@harm#1{%
	\vbox{\hbox{%
			\offinterlineskip
			\valign{&\hb@xt@2\p@{\hss$##$\hss}\vskip.2ex\cr#1\crcr}%
		}\vskip-.36ex}%
}
\def\gshone{\spher@harm{.}}
\def\gshtwo{\spher@harm{.&.}}
\def\gshthree{\spher@harm{.&.&.}}
\let\gsh\spher@harm
\newtheorem{coro}{Corollary}[section]
\newtheorem{lem}{Lemma}[section]
\newtheorem{rem}{Remark}[section]
\newtheorem{thm}{Theorem}[section]
\newcounter{mnote}
\let\oldmarginpar\marginpar
\renewcommand\marginpar[1]
\def\@captype{table}\makeatother
\begin{document}
\title{
  \Large \bf A primal-dual flow for affine constrained convex optimization
%  programming
}
\author{Hao Luo\thanks{School of Mathematical Sciences, Peking University, Beijing, 100871, China. Email: luohao@math.pku.edu.cn}}
\date{}
\maketitle
\begin{abstract}
We introduce a novel primal-dual flow for affine constrained convex optimization problems. As a modification of the standard saddle-point system, our primal-dual flow is proved to possess the exponential decay property, in terms of a tailored Lyapunov function. Then two primal-dual methods are obtained from  numerical discretizations of the continuous model, and global nonergodic linear convergence rate is established via a discrete Lyapunov function. Instead of solving the subproblem of the primal variable, we apply the semi-smooth Newton iteration to the subproblem with respect to the multiplier, provided that there are some additional properties such as semi-smoothness and sparsity. Especially, numerical tests on the linearly constrained $l_1$-$l_2$ minimization and the total-variation based image denoising model have been provided.
\end{abstract}
	\medskip\noindent{\bf Keywords:} 
convex optimization, linear constraint, dynamical system, Lyapunov function, exponential decay, discretization, nonergodic linear rate, primal-dual algorithm, semi-smooth Newton method, $l_1$-$l_2$ minimization, total-variation model

%\subclass{37M99 \and 37N40  \and 65K05\and 90C25}
%\tableofcontents
\section{Introduction}
\label{sec:intro}
We are interested in the linearly constrained minimization problem
\begin{equation}\label{eq:min-f-Ax-b}
	\mathop{\min}_{x\in\R^n} f(x)\quad {\rm s.t.~} Ax = b,
\end{equation}
where $(A,b)\in\R^{m\times n}\times \R^{m}$ and $f:\R^n\to\R\cup\{+\infty\}$ is  proper, closed and convex.
Let $\Omega: = \R^n\times \R^{m}$ and introduce the Lagrangian 
\begin{equation}\label{eq:L_sg-intro}
	\mathcal L(x,\lambda): =  f(x) + \dual{\lambda,Ax-b}
	\quad \forall\,(x,\lambda)\in \Omega,
\end{equation}
where $\dual{\cdot,\cdot}$ denotes the 
standard $l^2$-inner product, with $\nm{\cdot} = \sqrt{\dual{\cdot,\cdot}}$ being the Euclidean norm. 
Assume $(x^*,\lambda^*)$ is a saddle-point of $\mathcal L(x,\lambda)$, which means 
\[
\mathcal L(x^*,\lambda)\leqslant \mathcal L(x^*,\lambda^*)\leqslant \mathcal L(x,\lambda^*)\quad \forall\,(x,\lambda)\in \Omega,
\]
and denote by $\Omega^*$ the set of all saddle-points.
Any $(x^*,\lambda^*)\in\Omega^*$
satisfies the Karush--Kuhn--Tucker (KKT) system
\begin{equation}\label{eq:kkt}
	\left\{
	\begin{aligned}
		{}&0=\nabla_\lambda\mathcal L(x^*,\lambda^*)=Ax^*-b,\\		
		{}&0\in \partial_x \mathcal L(x^*,\lambda^*)=\partial f(x^*)+A^{\top}\lambda^*,
	\end{aligned}
	\right.
\end{equation}
where $\partial f(x^*)$ is the subdifferential of $f$ at $x^*$.

For the standard model problem \cref{eq:min-f-Ax-b}, there are a large body of primal-dual type algorithms that achieve the fast (ergodic) sublinear rate $\mathcal O(1/k^2)$ with strongly convex condition; see Section \ref{sec:sota} for a brief review. Meanwhile, some existing works also focus on the asymptotic convergence from the continuous-time point of view, i.e., the saddle-point dynamical system \cite{Cherukuri2016,Feijer2010}
\begin{equation}
	\label{eq:sp}
	\left\{
	\begin{aligned}
		{}&\lambda'  ={}\nabla_\lambda \mathcal L(x,\lambda),\\
		{}&		x'  =-\nabla_x \mathcal L(x,\lambda).
	\end{aligned}
	\right.
\end{equation}
In this work, we shall modify the conventional model \cref{eq:sp} and introduce a novel primal-dual flow system which possesses exponential decay property. New primal-dual algorithms shall be obtained from proper time discretizations and nonergodic linear convergence rate will be proved via the tool of Lyapunov function.

To move on, let us make some conventions. We say a function $g:\R^n\to\R$ is $L$-smooth if it has $L$-Lipschitz continuous gradient:
\[
\nm{\nabla g(x)-\nabla g(y)}\leqslant L\nm{x-y}
\quad \forall\,x,y\in\R^n.
\]
For a properly closed convex function $g:\R^n\to\R\cup\{+\infty\}$, it is called $\mu$-convex if there exists $\mu\geqslant0$ such that 
\[
g(x) + \dual{p,y-x}+\frac{\mu}{2}\nm{y-x}^2\leqslant g(y),
\]
for all $p\in\partial g(x)$. The proximal mapping $\prox_{\theta g}:\R^n\to\R^n$ of $g$ with $\theta>0$ is defined by
\[
\prox_{\theta g}(x): =({\rm Id}+\theta\partial g)^{-1}(x)= \mathop{\argmin}\limits_{y\in\R^n}\left\{g(y) + \frac{1}{2\theta}\nm{y-x}^2\right\}\quad\forall\,x\in\R^n,
\]
where $\id$ denotes the identity operator.
Clearly, if $f$ is $\mu$-convex, then according to \cref{eq:L_sg-intro}, we claim that $\mathcal L(\cdot,\lambda)$ is also $\mu$-convex and 
\begin{equation}\label{eq:mu-conv-L}
	\mathcal L(x,\lambda) + \dual{p+A^\top\lambda,y-x}+\frac{\mu}{2}\nm{y-x}^2\leqslant \mathcal L(y,\lambda),\quad p\in\partial f(x).
\end{equation}
\subsection{Main results}
Following the time rescaling technique and the tool of Lyapunov function from \cite{luo_differential_2019,chen_unified_2021}, for smooth and $\mu$-convex objective $f$, we propose a primal-dual flow
\begin{equation}
	\label{eq:pdf-x-intro}
	\left\{
	\begin{aligned}
		{}&			\gamma x'  =-\nabla_x \mathcal L(x,\lambda),\\
		{}&\beta\lambda'  ={}\nabla_\lambda \mathcal L(x+x',\lambda),
	\end{aligned}
	\right.
\end{equation}
where $\gamma$ and $\beta$ are two nonnegative scaling factors that are governed by $\gamma' ={}\mu-\gamma$ and $\beta' =-\beta$, respectively. 
Compared with the classical one \cref{eq:sp}, our new model \cref{eq:pdf-x-intro} has two novelties: (i) it introduces two built-in time rescaling factors that unify the analysis for $\mu\geqslant 0$; (ii) the term $\nabla_\lambda \mathcal L(x+x',\lambda)$ (instead of the standard one $\nabla_\lambda \mathcal L(x,\lambda)$) brings stability and reduces the oscillation; see Section \ref{sec:simple-model} for an illustrative equilibrium analysis.  Besides, the extra term $x'$ in $\nabla_\lambda \mathcal L(x+x',\lambda)$ has subtle connection with the over-relaxation $x_{k+1}+\theta(x_{k+1}-x_k)$ introduced in the primal-dual hybrid gradient (PDHG) method \cite{chambolle_first-order_2011}; see Appendix \ref{app:pdhg} for a more reasonably intrinsic explanation.

We then equip the dynamical system \cref{eq:pdf-x-intro} with a tailored Lyapunov function
\[
\mathcal E(t) := \mathcal L(x(t),\lambda^*) - \mathcal L(x^*,\lambda(t)) + \frac{\gamma(t)}{2}\nm{x(t)-x^*}^2+\frac{\beta(t)}{2}\nm{\lambda(t)-\lambda^*}^2,\quad\,t\geqslant 0,
\]
which possesses the exponential decay (see \cref{lem:pdf-sys-dE-G})
\begin{equation}\label{eq:exp-decay-intro}
	\frac{\dd}{\dd t}\mathcal E(t)\leqslant -\mathcal E(t)\quad\Longrightarrow\quad\mathcal E(t)
	\leqslant e^{-t}\mathcal E(0),\quad t\geqslant 0.
\end{equation}
From \cref{eq:exp-decay-intro} we have $\mathcal L(x(t),\lambda^*)-\mathcal L(x^*,\lambda(t))\leqslant e^{-t}\mathcal E(0)$, and we can further prove  $\snm{f(x(t))-f(x^*)}+\nm{Ax(t)-b}\leqslant Ce^{-t}$; see \cref{coro:conv}.

We also consider implicit and semi-implicit discretizations for the continuous flow \cref{eq:pdf-x-intro} (in general nonsmooth setting) and obtain new primal-dual algorithms, which are close to the (linearized) proximal augmented Lagrangian method but adopt automatically changing parameters. In addition, instead of solving the subproblem of the primal variable, we apply the semi-smooth Newton (SsN) iteration to the subproblem with respect to the multiplier, provided that there are some hidden structures such as semi-smoothness and sparsity. By using a unified discrete Lyapunov function 
\[
\mathcal E_k = \mathcal L(x_k,\lambda^*) - \mathcal L(x^*,\lambda_k) + \frac{\gamma_k}{2}\nm{x_k-x^*}^2+\frac{\beta_k}{2}\nm{\lambda_k-\lambda^*}^2,
\]
we prove the contraction property: 
\[
\mathcal E_{k+1}-\mathcal E_{k}\leqslant -\alpha_k\mathcal E_{k+1}
\quad\text{or}\quad 
\mathcal E_{k+1}-\mathcal E_{k}\leqslant -\alpha_k\mathcal E_{k}\quad\forall\,k\in\mathbb N,
\]
from which we obtain {\it nonergodic} convergence rates of the objective gap  $\snm{f(x_k)-f(x^*)}$ and the feasibility residual $\nm{Ax_k-b}$. More precisely, the implicit discretization converges with (super) linear rate for convex objective $f$ and the semi-implicit scheme possesses the rate $\mathcal O(\min\{L/k,(1+\mu/L)^{-k}\})$ for the composite case $f = h+g$ where $h$ is $L$-smooth and $\mu$-convex and $g$ is convex (possibly nonsmooth).
\subsection{Related works}
\label{sec:sota}
As one can add the indicator function of the constraint set 
to the objective and get rid of the linear constraint in \eqref{eq:min-f-Ax-b},
the proximal gradient method \cite{palomar_gradient-based_2009}, as well as the accelerated proximal gradient method \cite{beck_fast_2009,chen_luo_first_2019,luo_differential_2019,nesterov_gradient_2013}, 
can be considered. However, they need projections onto the affine constraint set and are not suitable to handle the composite case $f = h+g$. 

Therefore, prevailing algorithms 
are the augmented Lagrangian method (ALM) \cite{Bertsekas_2014}, the Bregman iteration \cite{Osher2005} and their variants (linearization or acceleration) \cite{Huang2013,Kang2013,Xu2017,kang_inexact_2015,tao_accelerated_2016,tran-dinh_constrained_2014,tran-dinh_smooth_2018}. Another type of algorithm is the quadratic penalty method with continuation technique \cite{Lan2013,Li2017}. 
Among those methods mentioned here, the fast rate $\mathcal O(1/k^2)$ is mainly in ergodic sense for primal variable and it is rare to see global nonergodic linear rate, even with strongly convex objectives. More recently, Li, Sun and Toh \cite{Li2020d} proposed a (super) linearly convergent semi-smooth Newton based inexact proximal ALM for linear programming. Later, this method has been extended to quadratic programming \cite{Li2020e,Niu2019a}.

For the separable case: $ f(x) = f_1(x_1)+f_2(x_2),\,A = (A_1,A_2)$, we have alternating direction method of multipliers (ADMM) \cite{gabay_dual_1976,Fortin_1983} for primal problem and 
operator splitting methods \cite{douglas_numerical_1956,peaceman_numerical_1955,eckstein_augmented_2012} for dual problem. For ADMM type methods, the sublinear rate $\mathcal O(1/k^2)$ can be proved under partially strong convexity assumption \cite{sabach_faster_2020,tran-dinh_non-stationary_2020,tran-dinh_proximal_2019,tran-dinh_augmented_2018} and global linear rate has been established as well for strongly convex (smooth) objectives \cite{davis_faster_2015,giselsson_linear_2017,deng_global_2016}. In addition, (local) linear convergence can be derived from the error bound condition \cite{aspelmeier_local_2016,han_linear_2015,liu_partial_2018,yang_linear_2016,yuan_discerning_2020}. For a special case $A_1 = I$ or $A_2=I$, there are primal-dual splitting methods \cite{chambolle_introduction_2016,chambolle_first-order_2011,he_convergence_2014,esser_general_2010,pock_algorithm_2009,zhu_ecient_2008,jiang_approximate_2021,bonettini_convergence_2012}. Generally speaking, we have sublinear rate $\mathcal O(1/k^2)$ for partially strongly convex case and linear rate for strongly convex case \cite{chambolle_ergodic_2016,valkonen_inertial_2020,tran-dinh_unified_2021}. Moreover, equivalence between primal-dual splitting methods and ADMM type methods can be found in \cite{clason_nonsmooth_2020,yan_self_2015,oconnor_equivalence_2018}.

On the other hand, ordinary differential equation (ODE) solver approach 
has been revisited nowadays for investigating and developing optimization methods.
For unconstrained problems, there are heavy ball model
\cite{attouch_heavy_2000}, asymptotically vanishing dynamical (AVD) model 
\cite{su_dierential_2016} and their extensions
\cite{attouch_fast_convergence_2018,attouch_fast_2016,wibisono_variational_2016,wilson_lyapunov_2021,lin_control-theoretic_2019}. Besides, Luo and Chen \cite{luo_differential_2019} proposed the so-called Nesterov accelerated gradient flow and later generalized it to \cite{chen_luo_first_2019,chen_unified_2021,luo_accelerated_2021}. 

For linearly constrained problem \cref{eq:min-f-Ax-b}, apart from the classical first-order saddle-point system \cref{eq:sp}, some second-order dynamics have been proposed as well. Zeng, Lei and Chen \cite{Zeng2019} generalized the AVD model
and obtained the decay rate $\mathcal O(t^{-\mathop{\min}\{2,2\alpha/3\}})$
via a suitable Lyapunov function. He, Hu and Fang \cite{He2020} extended the dynamical system in \cite{Zeng2019} to separable case.
Revisiting the scaled alternating direction method of multipliers \cite{Boyd2010}, Franca, Robinson and Vidal \cite{Franca2018b} derived a continuous model which is also related to the AVD model and proved the decay rate $\mathcal O(1/t^2)$. Yet, none of Zeng et al. \cite{Zeng2019}, He et al. \cite{He2020} and Franca et al. \cite{Franca2018b} neither considered numerical discretizations for their dynamical systems nor presented new optimization algorithms for the original optimization problem. For general minimax problems, there are some works on dynamical system approach \cite{Lu2020,cherukuri_saddle-point_2017}.

Comparing with existing works, we summarize our main contributions as below:
\begin{itemize}
	\item  The continuous primal-dual flow \cref{eq:pdf-x-intro} adopts built-in time rescaling factors for both convex and strongly convex cases and has exponential decay rate with respect to a proper Lyapunov function.
	\item A simple but illustrative equilibrium analysis shows the gain of stability that is benefit from the modification introduced in \cref{eq:pdf-x-intro}.
	\item New primal-dual algorithms with automatically changing parameters are obtained from proper time discretizations of the continuous model and the semi-smooth Newton method is considered for the subproblem with respect to the multiplier.
	\item Nonergodic (super) linear convergence rate of the objective gap and feasibility residual is established via the tool of discrete Lyapunov function.
\end{itemize}

The rest of this paper is organized as follows. Section \ref{sec:ode} starts from the classical saddle-point system and introduces a new primal-dual flow. Then Sections \ref{sec:im} and \ref{sec:comp} consider implicit and semi-implicit discretizations respectively and establish the (super) linear convergence rates of the resulted primal-dual algorithms. Numerical performances on the $l_1$-$l_2$ minimization and the total-variation based denoising model are presented in Section \ref{sec:numer} and finally, some concluding remarks are given in Section \ref{sec:conclu}.
\section{Continuous Problems}
\label{sec:ode}
\subsection{The saddle-point system}
\label{sec:try-pdf}
To present the main idea clearly, let us start from the rescaled saddle-point system
\begin{equation}	\label{eq:pdf-try}
	\left\{
	\begin{aligned}
		{}&			\beta\lambda'  ={}\nabla_\lambda \mathcal L(x,\lambda),\\
		{}&	\gamma x'  =-\nabla_x \mathcal L(x,\lambda),
	\end{aligned}
	\right.
\end{equation}
with the initial condition $(x(0),\lambda(0))=(x_0,\lambda_0)\in\Omega$, where $\gamma $ and $\beta$ are two artificial time rescaling factors and satisfy (cf. \cite{luo_differential_2019,chen_unified_2021})
\begin{equation}\label{eq:beta}
	\gamma' =\mu-\gamma\quad \beta' =-\beta,
\end{equation}
with positive initial condition $(\gamma(0),\beta(0))=(\gamma_0,\beta_0)$. 
One can easily solve \cref{eq:beta} to obtain
\begin{equation}\label{eq:sol-beta}
	\gamma(t) = {}\gamma_0 e^{-t}+\mu(1-e^{-t})\quad\text{and}\quad
	\quad \beta(t) = {}\beta_0e^{-t}\quad t\geqslant0,
\end{equation}
which implies $\gamma$ and $\beta$ are positive and converge exponentially to $\mu$ and $0$, respectively. 

Assume $f\in C^1_L$ and define $F:\R_+\times \Omega\to \Omega$ by that
\[
F(t,Z): = \begin{pmatrix}
	\displaystyle
	-	\frac{1}{\gamma(t)}\nabla_x \mathcal L(x,\lambda)\\
	\displaystyle
	\frac{1}{\beta(t)}\nabla_\lambda \mathcal L(x,\lambda)
\end{pmatrix}\quad\forall\, Z = \begin{pmatrix}
	x\\\lambda
\end{pmatrix}\in\Omega.
\]
Then \cref{eq:pdf-try} can be rewritten as $Z'(t) = F(t,Z(t))$ and a direct calculation yields that for all $Z,\,Y\in \Omega$ and $0\leqslant s\leqslant t$,
\[
\nm{F(t,Z)-F(s,Y)}\leqslant C_0(L+\nm{A})\left(
\snm{t-s}\nm{Z-Z^*} + \nm{Z-Y}
\right)e^t,
\]
where $Z^*=(x^*,\lambda^*)\in\Omega^*$ and the bounded positive constant $C_0$ depends only on $\gamma_0,\beta_0$ and $\mu$. This means $F$ is locally Lipschitz continuous and according to \cite[Proposition 6.2.1]{haraux_1991} and \cite[Corollary A.2]{brezis_1973}, the first-order dynamical system \cref{eq:pdf-try} exists a unique solution $Z = (x,\lambda)\in C^1(\R_+;\Omega)$. 

Let $\boldsymbol V := \Omega\times\R_{+}\times\R_{+}$ and for any $X = (x,\lambda,\gamma,\beta)\in \boldsymbol V$,
introduce a Lyapunov function
\begin{equation}\label{eq:EX}
	\mathcal E(X): = \mathcal L(x,\lambda^*)-\mathcal L(x^*,\lambda)
	+\frac{\gamma}{2}\nm{x-x^*}^2
	+\frac{\beta}{2}\nm{\lambda-\lambda^*}^2.
\end{equation}
Our goal is to establish the exponential decay property of \eqref{eq:EX} along with the solution trajectory $X:\R_+\to\boldsymbol  V$. Below, we present a lemma which violates our goal but heuristically motivates us to the right way.
\begin{lem}\label{lem:pdf-try-dE-G}
	Assume $f$ is $L$-smooth and $\mu$-convex with $\mu\geqslant 0$ and let $X=(x,\lambda,\gamma,\beta):\R_+\to\boldsymbol V$ be the unique solution to \cref{eq:pdf-try,eq:beta}, then 
	\begin{equation}\label{eq:pdf-try-dE-G}
		\frac{\rm d }{{\rm d} t}	
		\mathcal E(X)
		\leqslant -\mathcal E(X)
		-\gamma\nm{x'}^2-\dual{Ax',\lambda-\lambda^*}.
	\end{equation}
\end{lem}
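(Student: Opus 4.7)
The plan is to differentiate $\mathcal{E}(X(t))$ term by term along the trajectory, then use the two ODEs in \cref{eq:pdf-try,eq:beta}, $\mu$-convexity of $f$, and the feasibility $Ax^*=b$ to collect everything into the desired form.

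First I would compute
\[
\frac{\dd}{\dd t}\mathcal E(X)
=\frac{\dd}{\dd t}\mathcal L(x,\lambda^*)
-\frac{\dd}{\dd t}\mathcal L(x^*,\lambda)
+\frac{\gamma'}{2}\nm{x-x^*}^2+\gamma\dual{x',x-x^*}
+\frac{\beta'}{2}\nm{\lambda-\lambda^*}^2+\beta\dual{\lambda',\lambda-\lambda^*}.
\]
The term $\frac{\dd}{\dd t}\mathcal L(x^*,\lambda)=\dual{Ax^*-b,\lambda'}$ vanishes because $Ax^*=b$. For the other Lagrangian term I would substitute $\nabla f(x)=-\gamma x'-A^\top\lambda$, which comes from the primal ODE; this turns $\dual{\nabla f(x)+A^\top\lambda^*,x'}$ precisely into the two ``orphan'' terms $-\gamma\nm{x'}^2-\dual{Ax',\lambda-\lambda^*}$ that appear in the claim.

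Next I would process the two inner products $\gamma\dual{x',x-x^*}+\beta\dual{\lambda',\lambda-\lambda^*}$. Plugging in $\gamma x'=-\nabla f(x)-A^\top\lambda$ and $\beta\lambda'=Ax-b=A(x-x^*)$ and cancelling the symmetric cross term $\dual{A^\top\lambda,x-x^*}=\dual{Ax-Ax^*,\lambda}$, this collapses to $-\dual{\nabla f(x),x-x^*}-\dual{A^\top\lambda^*,x-x^*}$. Now invoke $\mu$-convexity of $f$ (\cref{eq:mu-conv-L} with $\lambda=0$, $p=\nabla f(x)$) to bound $-\dual{\nabla f(x),x-x^*}\leqslant f(x^*)-f(x)-\frac{\mu}{2}\nm{x-x^*}^2$. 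Combined with $\gamma'/2=(\mu-\gamma)/2$, the $\mu/2$ pieces cancel, leaving $-\frac{\gamma}{2}\nm{x-x^*}^2$.

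The final cleanup is recognizing the Lagrangian gap: using $Ax^*=b$,
\[
f(x^*)-f(x)-\dual{A^\top\lambda^*,x-x^*}
=-\bigl(f(x)-f(x^*)+\dual{\lambda^*,Ax-Ax^*}\bigr)
=-\bigl(\mathcal L(x,\lambda^*)-\mathcal L(x^*,\lambda^*)\bigr),
\]
and since $\mathcal L(x^*,\lambda^*)=\mathcal L(x^*,\lambda)=f(x^*)$ (the dual term kills both because $Ax^*-b=0$), this equals $-\bigl(\mathcal L(x,\lambda^*)-\mathcal L(x^*,\lambda)\bigr)$. Together with $\beta'/2=-\beta/2$ giving $-\frac{\beta}{2}\nm{\lambda-\lambda^*}^2$, the three pieces $-\bigl(\mathcal L(x,\lambda^*)-\mathcal L(x^*,\lambda)\bigr)-\frac{\gamma}{2}\nm{x-x^*}^2-\frac{\beta}{2}\nm{\lambda-\lambda^*}^2$ assemble into exactly $-\mathcal E(X)$, proving \cref{eq:pdf-try-dE-G}.

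I do not expect a serious obstacle here; the computation is mostly bookkeeping. The only subtle point worth flagging is that the ``troublesome'' residual $-\dual{Ax',\lambda-\lambda^*}$ on the right-hand side is not sign-definite, which is precisely why this lemma falls short of proving exponential decay and motivates replacing $\nabla_\lambda\mathcal L(x,\lambda)$ by $\nabla_\lambda\mathcal L(x+x',\lambda)$ in the primal-dual flow \cref{eq:pdf-x-intro}; that modification is what will later absorb $-\dual{Ax',\lambda-\lambda^*}$ into the dissipation.
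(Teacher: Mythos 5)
Your proposal is correct and follows essentially the same route as the paper's own proof: both differentiate $\mathcal E$ along the trajectory, split $\nabla_x\mathcal L(x,\lambda^*)=\nabla_x\mathcal L(x,\lambda)-A^{\top}(\lambda-\lambda^*)$ to produce the terms $-\gamma\nm{x'}^2-\dual{Ax',\lambda-\lambda^*}$, cancel the symmetric cross term between the primal and dual inner products, and apply $\mu$-convexity together with $\gamma'=\mu-\gamma$ and $\beta'=-\beta$ to assemble $-\mathcal E(X)$. The only cosmetic difference is that you work directly with $\nabla f$ rather than with $\nabla_x\mathcal L(\cdot,\lambda^*)$, which is immaterial under the smoothness assumption.
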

\begin{proof}
	As discussed above, $(x,\lambda)\in C^1(\R_+;\Omega)$ exists uniquely and by \cref{eq:pdf-try}, a direct computation gives
	\[
	\begin{split}
		\frac{\rm d }{{\rm d} t}	
		\mathcal E(X)
		={}&\dual{\nabla_x\mathcal E(X),x'}+\dual{\nabla_\lambda\mathcal E(X),\lambda'}+
		\dual{\nabla_\gamma\mathcal E(X),\gamma'}	+\dual{\nabla_\beta\mathcal E(X),\beta'}\\
		={}&\underbrace{	-\frac{1}{\gamma}\dual{\nabla_x\mathcal L(x,\lambda), \nabla_x\mathcal L(x,\lambda^*)}}_{I_1}+\underbrace{\dual{\nabla_\lambda \mathcal L(x,\lambda),\lambda-\lambda^*}
			-\dual{\nabla_x\mathcal L(x,\lambda),x-x^*}}_{I_2}\\
		{}&\qquad\underbrace{-\frac{\beta}{2}\nm{\lambda-\lambda^*}^2
			+ \frac{\mu-\gamma}{2}\nm{x-x^*}^2}_{I_3}\\
		:={}&I_1+I_2+I_3.
	\end{split}
	\]
	We split $\nabla_x\mathcal L(x,\lambda^*)=\nabla_x\mathcal L(x,\lambda)-A^{\top}(\lambda-\lambda^*)$ and use the relation $\gamma x' = -\nabla_x\mathcal L(x,\lambda)$ to get
	\begin{equation}\label{eq:I1}
		\begin{split}
			I_1=&-	\frac{1}{\gamma}\dual{\nabla_x\mathcal L(x,\lambda), \nabla_x\mathcal L(x,\lambda)-A^{\top}(\lambda-\lambda^*)} 
			=-\gamma\nm{x'}^2-\dual{Ax',\lambda-\lambda^*}.
		\end{split}
	\end{equation}
	Also, we reformulate $I_2$ as follows
	\begin{equation}\label{eq:I2}\small
		\begin{split}
			I_2
			={}&
			\dual{\nabla_\lambda \mathcal L(x,\lambda),\lambda-\lambda^*}
			-\dual{Ax-Ax^*,\lambda-\lambda^*}
			-\dual{\nabla_x\mathcal L(x,\lambda^*),x-x^*}
			=-\dual{\nabla_x\mathcal L(x,\lambda^*),x-x^*},
		\end{split}
	\end{equation}
	where we have used the optimality condition $Ax^*=b$.
	Since $f$ is $\mu$-convex, we know that $\mathcal L(\cdot,\lambda^*)$ is also $\mu$-convex and it follows from \cref{eq:mu-conv-L} that
	\[
	\begin{split}
		I_2
		\leqslant {}&\mathcal L(x,\lambda^*)-\mathcal L(x^*,\lambda^*)-\frac{\mu}{2}\nm{x-x^*}^2
		={}\mathcal L(x,\lambda^*)-\mathcal L(x^*,\lambda)-\frac{\mu}{2}\nm{x-x^*}^2.
	\end{split}
	\]
	Here, recall the fact that $\mathcal L(x^*,\cdot)$ is a constant. Hence, collecting $I_3$, \cref{eq:I1,eq:I2} proves \eqref{eq:pdf-try-dE-G}.
\end{proof}
To obtain $\mathcal E'(X)\leqslant -\mathcal E(X)$ from  \eqref{eq:pdf-try-dE-G}, we shall prove $-\gamma\nm{x'}^2-\dual{Ax',\lambda-\lambda^*}\leqslant 0$.
In stead of twisting on the existence of this, in the next section, we resort to introducing a subtle modification that cancels exactly the cross term $\dual{Ax',\lambda-\lambda^*}$ in \eqref{eq:pdf-try-dE-G} and finally leads to the desired estimate.
\subsection{A new primal-dual flow}
\label{sec:pd-flow}
Although \eqref{eq:pdf-try-dE-G} fails to give the desired result, it suggests a simple remedy: replacing $\nabla _\lambda \mathcal L(x,\lambda)$ 
by $\nabla _\lambda \mathcal L(x+x',\lambda)$. Then the first part $I_1$ (cf. \eqref{eq:I1}) 
brings one more term $\dual{Ax',\lambda-\lambda^*}$ which offsets exactly the last term in \eqref{eq:pdf-try-dE-G} while both $I_2$ and $I_3$ keep unchanged. 

Namely, we leave the parameter system \cref{eq:beta} invariant but modify \cref{eq:pdf-try} properly to obtain a novel primal-dual flow
\begin{subnumcases}{}
	\gamma x'  =-\nabla_x \mathcal L(x,\lambda),
	\label{eq:pdf-x}\\
	\beta\lambda'  ={}\nabla_\lambda \mathcal L(x+x',\lambda).
	\label{eq:pdf-lambda}
\end{subnumcases}
Similar with \cref{eq:pdf-try}, we claim that \cref{eq:pdf-lambda} admits a unique classical solution $(x,\lambda)\in C^1(\R_+;\Omega)$. We also mention that the extrapolation idea $x+x'$ in \eqref{eq:pdf-lambda} can be found previously in the second-order primal-dual ODE proposed by \cite{Zeng2019}. In the sequel, we shall complete the exponential decay of the Lyapunov function \eqref{eq:EX} and then provide an illustrative equilibrium analysis that gives a convincible explanation of the subtle modification $x+x'$. Additionally, in Appendix \ref{app:pdhg}, we present an over-relaxation perspective, which perhaps shows the intrinsic connection with the PDHG method \cite{chambolle_first-order_2011}.
\begin{thm}\label{lem:pdf-sys-dE-G} 
	Assume $f$ is $L$-smooth and $\mu$-convex with $\mu\geqslant 0$ and let $X=(x,\lambda,\gamma,\beta):\R_+\to\boldsymbol V$ be the unique solution to \cref{eq:beta,eq:pdf-x}, then 
	\begin{equation}\label{eq:pdf-sys-EG}
		\frac{\rm d }{{\rm d} t}	\mathcal E(X)
		\leqslant -\mathcal E(X)-\gamma\nm{x'}^2.
	\end{equation}
	Consequently, we have the exponential decay
	\begin{equation}\label{eq:exp-rate}
		\mathcal E(X(t))+\int_{0}^{t}e^{s-t}\gamma(s)\nm{x'(s)}^2{\rm d}s
		\leqslant e^{-t}	\mathcal E(X(0)),
		\quad 0\leqslant t<\infty.
	\end{equation}
\end{thm}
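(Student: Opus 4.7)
The plan is to imitate the computation of \cref{lem:pdf-try-dE-G} almost verbatim, exploiting the fact that the primal equation \cref{eq:pdf-x} and the scalar system \cref{eq:beta} are unchanged; only the dual equation is modified, so only the piece of $\frac{\rm d}{{\rm d}t}\mathcal E(X)$ that involves $\lambda'$ needs reworking. Concretely, I would decompose
\[
\frac{\rm d}{{\rm d}t}\mathcal E(X)=I_1+I_2+I_3
\]
exactly as in \cref{lem:pdf-try-dE-G}, where $I_1$ (coming from $\langle\nabla_x\mathcal E(X),x'\rangle$) depends only on the primal equation and the identity $\nabla_x\mathcal L(x,\lambda^*)=\nabla_x\mathcal L(x,\lambda)-A^{\top}(\lambda-\lambda^*)$, so it still equals $-\gamma\nm{x'}^2-\dual{Ax',\lambda-\lambda^*}$ as in \cref{eq:I1-}, and $I_2$ still equals $-\tfrac{\beta}{2}\nm{\lambda-\lambda^*}^2+\tfrac{\mu-\gamma}{2}\nm{x-x^*}^2$ because \cref{eq:beta} is unchanged.

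The heart of the argument is the new $I_3$. With \cref{eq:pdf-lambda} we now have $\beta\lambda'=\nabla_\lambda\mathcal L(x+x',\lambda)=A(x+x')-b$, so
\[
I_3=\dual{\nabla_\lambda\mathcal L(x+x',\lambda),\lambda-\lambda^*}-\dual{\nabla_x\mathcal L(x,\lambda),x-x^*}.
\]
Splitting $\nabla_\lambda\mathcal L(x+x',\lambda)=(Ax-b)+Ax'$ and repeating the derivation of \cref{eq:I3-} (using $Ax^*=b$ and the decomposition $\nabla_x\mathcal L(x,\lambda)=\nabla_x\mathcal L(x,\lambda^*)+A^{\top}(\lambda-\lambda^*)$) yields an extra cross term, producing
\[
I_3=\dual{Ax',\lambda-\lambda^*}-\dual{\nabla_x\mathcal L(x,\lambda^*),x-x^*}.
\]
This is precisely the modification designed to cancel the undesirable $-\dual{Ax',\lambda-\lambda^*}$ sitting inside $I_1$; after that cancellation the remaining $-\dual{\nabla_x\mathcal L(x,\lambda^*),x-x^*}$ is bounded by $\mu$-convexity \cref{eq:mu-conv-L} (applied to $\mathcal L(\cdot,\lambda^*)$ with $y=x^*$ and the constancy $\mathcal L(x^*,\lambda^*)=\mathcal L(x^*,\lambda)$), giving
\[
I_3\leqslant \dual{Ax',\lambda-\lambda^*}+\mathcal L(x^*,\lambda)-\mathcal L(x,\lambda^*)-\frac{\mu}{2}\nm{x-x^*}^2.
\]

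Summing $I_1+I_2+I_3$, the $\pm\dual{Ax',\lambda-\lambda^*}$ terms annihilate and the $\pm\tfrac{\mu}{2}\nm{x-x^*}^2$ contributions combine with the $-\tfrac{\gamma}{2}\nm{x-x^*}^2$ surplus to reproduce exactly $-\mathcal E(X)-\gamma\nm{x'}^2$, proving \cref{eq:pdf-sys-EG}. For the exponential decay \cref{eq:exp-rate}, I would multiply by the integrating factor $e^t$ to get $\tfrac{\rm d}{{\rm d}t}(e^t\mathcal E(X))\leqslant -e^t\gamma\nm{x'}^2$ and integrate on $[0,t]$. The main obstacle is really just algebraic bookkeeping — confirming that the three $\dual{Ax',\lambda-\lambda^*}$ occurrences land with the correct signs so that the cross term vanishes and the surviving terms fit the Lyapunov function; this is the precise ``equilibrium-stabilizing'' role played by the extrapolation $x+x'$ introduced in \cref{eq:pdf-lambda}.
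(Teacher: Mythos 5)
Your proposal is correct and is exactly the argument the paper intends: it declares the proof of \cref{eq:pdf-sys-EG} to be ``in line with'' that of \cref{eq:pdf-try-dE-G}, with the extrapolation $x+x'$ in \cref{eq:pdf-lambda} contributing the extra $\dual{Ax',\lambda-\lambda^*}$ in $I_3$ that cancels the cross term in \cref{eq:I1-}, while $I_1$ and $I_2$ are unchanged. Your bookkeeping of the signs, the use of \cref{eq:mu-conv-L}, and the integrating-factor step for \cref{eq:exp-rate} all match what the paper leaves implicit.
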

\begin{proof}
	According to the above discussions, the proof of \eqref{eq:pdf-sys-EG} is in line with that of \eqref{eq:pdf-try-dE-G} and thus omitted here. The estimate \eqref{eq:exp-rate} follows from \eqref{eq:pdf-sys-EG} immediately.
\end{proof}

Thanks to the two scaling factors introduced
in \cref{eq:beta}, the exponential decay \eqref{eq:exp-rate} holds 
uniformly for $\mu\geqslant0$. Let $\gamma_{\min}:=\min\{\gamma_0,\mu\} $, then by \cref{eq:sol-beta}, we have 
\begin{equation}\label{eq:bd-gama}
	\gamma(t)\geqslant \max\left\{
	\gamma_{\min},\,\gamma_{0} e^{-t}
	\right\}\quad\forall\,t\geqslant 0.
\end{equation}
Furthermore, we have a corollary which gives: (i) the boundness of $\lambda(t)$ and $x(t)$; (ii) exponential decay of the Lagrangian $\mathcal L(x(t),\lambda^*)-\mathcal L(x^*,\lambda(t))$, the primal objective residual $\snm{f(x(t))-f(x^*)}$ and the feasibility violation $\nm{Ax(t)-b}$; (iv) the integrability of $\nm{x'(t)}$.
\begin{coro}\label{coro:conv}
	Assume $f$ is $L$-smooth and $\mu$-convex with $\mu\geqslant 0$. Then for the unique solution $(x,\lambda):\R_+\to\Omega$ of \cref{eq:pdf-x} , we have the following.
	\begin{enumerate}
		\item $\sqrt{\gamma_{0}+\gamma_{\min}e^t}\nm{x'(t)}\in L^2(0,\infty)$.			
		\item $0\leqslant\mathcal L(x(t),\lambda^*)-\mathcal L(x^*,\lambda(t))\leqslant e^{-t}\mathcal E(X(0))$.						
		\item $\lambda(t)$ is bounded: $\beta_0\nm{\lambda(t)-\lambda^*}^2\leqslant 2\mathcal E(X(0))$.
		\item $x(t)$ is bounded: $\gamma_0\nm{x(t)-x^*}^2\leqslant 2\mathcal E(X(0))$ and $\gamma_{\min}\nm{x(t)-x^*}^2
		\leqslant 2e^{-t}\mathcal E(X(0))$.					
		\item $\nm{Ax(t)-b}\leqslant e^{-t}\mathcal R_0$ and $\snm{f(x(t))-f(x^*)}\leqslant e^{-t}\big(\mathcal E(X(0))+\mathcal R_0\nm{\lambda^*}\big)$, where 
		\[
		\mathcal R_0:=\sqrt{2\beta_0\mathcal E(X(0))}+\beta_0\nm{\lambda_0-\lambda^*}+\nm{Ax_0-b}.
		\]
	\end{enumerate}
\end{coro}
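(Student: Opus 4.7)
The entire corollary will be extracted from the exponential decay \eqref{eq:exp-rate} of \cref{lem:pdf-sys-dE-G} together with the explicit formulas \eqref{eq:sol-beta} for $\gamma(t)$ and $\beta(t)$. First I would note that every summand in $\mathcal E(X)$ from \eqref{eq:EX} is nonnegative: the Lagrangian gap $\mathcal L(x,\lambda^*)-\mathcal L(x^*,\lambda)$ is nonnegative by the saddle-point property of $(x^*,\lambda^*)$, and the other two are nonnegative as squared norms. Items 1--4 then drop out by keeping only one summand at a time in $\mathcal E(X(t))\leqslant e^{-t}\mathcal E(X(0))$ and combining with the elementary observations $\gamma(t)\geqslant\gamma_0 e^{-t}$ (which cancels the $e^{-t}$ on the right to yield item 2), $\beta(t)=\beta_0 e^{-t}$ (yields item 3), and, when $\mu>0$, the sharper $\gamma(t)\geqslant\gamma_{\min}:=\min\{\gamma_0,\mu\}$ (which retains the $e^{-t}$ and yields item 4).

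For item 5, multiplying \eqref{eq:exp-rate} by $e^t$ gives $\int_0^t e^s\gamma(s)\nm{x'(s)}^2\dd s\leqslant\mathcal E(X(0))$. For general $\mu\geqslant 0$ the inequality $e^s\gamma(s)\geqslant\gamma_0$ (a restatement of $\gamma(s)\geqslant\gamma_0 e^{-s}$) produces $\nm{x'}\in L^2(0,\infty)$, while for $\mu>0$ the stronger bound $\gamma(s)\geqslant\gamma_{\min}>0$ upgrades this to $e^{t/2}\nm{x'}\in L^2(0,\infty)$ after letting $t\to\infty$.

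The main obstacle, and the only place where \cref{lem:pdf-sys-dE-G} is not enough by itself, is item 6, where we need a separate handle on the feasibility residual $Ax(t)-b$. The plan is to harvest the dual flow \eqref{eq:pdf-lambda} directly. Setting $u(t):=Ax(t)-b$, the identity $\beta\lambda'=\nabla_\lambda\mathcal L(x+x',\lambda)=A(x+x')-b=u+u'$ combined with $\beta=\beta_0 e^{-t}$ produces a linear ODE $u'+u=\beta_0 e^{-t}\lambda'$ which admits the integrating factor $e^t$, collapsing it to the clean identity $(e^t u)'=\beta_0\lambda'$. Integrating on $[0,t]$ yields the explicit representation $Ax(t)-b=e^{-t}[Ax_0-b+\beta_0(\lambda(t)-\lambda_0)]$, and a triangle inequality combined with the bound on $\lambda(t)$ from item 3 delivers the stated $\nm{Ax(t)-b}\leqslant e^{-t}\mathcal R_0$. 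For the objective gap, I would expand $f(x(t))-f(x^*)=[\mathcal L(x(t),\lambda^*)-\mathcal L(x^*,\lambda(t))]-\dual{\lambda^*,Ax(t)-b}$, bound the two pieces by item 1 and the feasibility estimate for the upper bound, and derive the lower bound $f(x(t))-f(x^*)\geqslant-\nm{\lambda^*}\nm{Ax(t)-b}$ from the saddle-point inequality $\mathcal L(x^*,\lambda^*)\leqslant\mathcal L(x(t),\lambda^*)$ together with $Ax^*=b$.
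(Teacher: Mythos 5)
Your proposal is correct and follows essentially the same route as the paper: items 1--5 are read off from the exponential decay \eqref{eq:exp-rate} using $\gamma(t)\geqslant\gamma_0 e^{-t}$, $\gamma(t)\geqslant\gamma_{\min}$ and $\beta(t)=\beta_0e^{-t}$, and item 6 rests on the same first-order ODE for $Ax(t)-b$ coming from \eqref{eq:pdf-lambda}. Your integrating-factor identity $(e^tu)'=\beta_0\lambda'$ is just a repackaging of the paper's conserved quantity $\xi(t)=\lambda(t)-\beta^{-1}(t)(Ax(t)-b)$, and the treatment of the objective gap is identical.
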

\begin{proof}
	The first to the fourth follow directly from \eqref{eq:EX}, \eqref{eq:exp-rate} and \eqref{eq:bd-gama}. Let us prove the last one. 
	Define $\xi(t): = \lambda(t) - \beta^{-1}(t)(Ax(t)-b)$,
	then by \eqref{eq:beta} and \eqref{eq:pdf-lambda}, 
	\begin{equation}\label{eq:dr/dt}
		\frac{{\rm d}\xi}{{\rm d} t} = \lambda'(t)-\beta^{-1}(t)\left(Ax'(t)+Ax(t)-b\right)=0,
	\end{equation}
	which says $\xi(t)=\xi(0)$ and also implies that
	\[
	\nm{Ax(t)-b}
	=\beta(t)\nm{\lambda(t)-\xi(0)}
	\leqslant \beta(t)\big(\nm{\lambda(t)-\lambda^*}+\nm{\xi(0)-\lambda^*}\big).
	\]
	Hence, from the fact $\beta(t) = \beta_0e^{-t}$ 
	and the boundness of $\nm{\lambda(t)-\lambda^*}$, we have
	\begin{equation}\label{eq:feas-res}
		\nm{Ax(t)-b}
		\leqslant 	e^{-t}\left(\sqrt{2\beta_0\mathcal E(X(0))}+\beta_0\nm{\xi(0)-\lambda^*}\right)
		\leqslant 
		e^{-t}\mathcal R_0.
	\end{equation}
	Besides, it follows from \cref{eq:exp-rate} that
	\[
	0\leqslant 	\mathcal L(x(t),\lambda^*)-\mathcal L(x^*,\lambda(t))=
	f(x(t))-f(x^*)+\dual{\lambda^*,Ax(t)-b}
	\leqslant e^{-t}\mathcal E(X(0)),
	\]
	which together with the previous estimate \cref{eq:feas-res} gives
	\[
	\begin{split}
		\snm{f(x(t))-f(x^*)}\leqslant{}& \nm{\lambda^*}\nm{Ax(t)-b}+
		e^{-t}\mathcal E(X(0))
		\leqslant {}e^{-t}\big(\mathcal E(X(0))+\mathcal R_0\nm{\lambda^*}\big).
	\end{split}
	\]
	This establishes the exponential decay of the primal objective error and completes the proof.
\end{proof}
\begin{rem}
	From \cref{coro:conv}, we conclude that for $\mu\geqslant 0$, the primal-dual gap $\mathcal L(x(t),\lambda^*)-\mathcal L(x^*,\lambda(t))$, the primal objective residual $\snm{f(x(t))-f(x^*)}$ and the feasibility violation $\nm{Ax(t)-b}$ decrease exponentially. We also have strong convergence: $\nm{x(t)-x^*}^2\leqslant Ce^{-t}$ for the strongly convex case $\mu>0$. \qed
\end{rem}
\begin{rem} \label{rem:nonsmooth}
	We mention that the well-posedness of \cref{eq:pdf-lambda} with general nonsmooth objective $f$ is of interest to study further. 
	As we can see, the modified system \cref{eq:pdf-lambda} promises the exponential decay \cref{eq:pdf-sys-EG} but it is totally different from the original one \cref{eq:pdf-try}. In nonsmooth setting, \cref{eq:pdf-try} can be almost viewed as a dynamical system governed by a maximally monotone operator:
	\begin{equation}\label{eq:Z'}
		Z'(t)+\Lambda(t)M(Z(t))\ni0,
	\end{equation}
	where $\Lambda(t) = {\rm diag}(\gamma^{-1}(t)I_n,\beta^{-1}(t)I_m),\,Z(t) = (x(t),\lambda(t))$ and  the maximally monotone operator $M:\Omega\to 2^{\Omega}$ is defined by that
	\begin{equation}\label{eq:M}
		M(Z): = \begin{pmatrix}
			\partial f(x) + A^\top\lambda\\
			b-	Ax
		\end{pmatrix}\quad\forall\,Z=\begin{pmatrix}
			x\\\lambda
		\end{pmatrix}\in\Omega.
	\end{equation}
	According to \cite[Section 4.2]{djafari-rouhani_nonlinear_2019}, we claim that \cref{eq:Z'} admits a unique solution $Z=(x,\lambda)\in W^{1,\infty}_{\rm loc}(\R_+;\Omega)$. However, our primal-dual flow \cref{eq:pdf-lambda} reads as (cf. \cref{eq:pdf-x-non})
	\begin{equation}\label{eq:Z-RM}
		Z'(t)+R(t)M(Z(t))\ni0,
	\end{equation}
	where $R(t)$ is a lower triangular matrix:
	\[
	R(t) = \begin{pmatrix}
		\gamma^{-1}(t)\id&O\\
		\gamma(t)^{-1}\beta^{-1}(t)A&	\beta^{-1}(t)\id
	\end{pmatrix}.
	\]	
	The existence and uniqueness of the solution to \cref{eq:Z-RM} is under studying. In addition, both the exponential decay \cref{eq:pdf-sys-EG} and (weak) convergence of the trajectory $Z(t)$ to a saddle-point $(x_\infty,\lambda_\infty)\in\Omega^*$ deserve future investigations. \qed
\end{rem}
\subsection{A simple equilibrium analysis}
\label{sec:simple-model}
Let $p>2$ be a positive even integer and consider a simple smooth convex function
\[
f(x) = \frac{1}{p}\left(x_1^p+x_2^p\right)
\quad\forall\, x = \begin{pmatrix}
	x_1\\x_2
\end{pmatrix}\in\R^2,
\]
with the linear constraint $ax = x_1-x_2 = 0$, where $a = (1,-1)$.
Clearly $(x^*,\lambda^*)=(0,0,0)$ is the unique saddle point.
Take $\mu = 0$ and for simplicity we choose $\gamma_0=\beta_0=1$, then $\gamma(t)=\beta(t)=e^{-t}$ and the original model \cref{eq:pdf-try} 
becomes
\begin{equation}\label{eq:pdf-try-simple}
	\left\{
	\begin{split}
		\lambda'= &\,e^t(x_1-x_2),\\
		x_1'= &-e^t(x_1^{p-1}+\lambda),\\
		x_2' = &-e^t(x_2^{p-1}-\lambda).
	\end{split}
	\right.
\end{equation}
The ``linearization" around $(x^*,\lambda^*)$ is
\[
\begin{pmatrix}
	\widehat{\lambda}\\	\widehat x
\end{pmatrix}'
=e^{t}B
\begin{pmatrix}
	\widehat{\lambda}\\	\widehat x
\end{pmatrix}\quad\text{with}\quad B = 
\begin{pmatrix}
	0&a\\-a^\top &O
\end{pmatrix}.
\]
Note that $B$ has three distinct eigenvalues: $b_1=0,\,b_2=-i\sqrt{2}$ and $b_3=i\sqrt2$. 
This implies $(x^*,\lambda^*)$ is stable but not asymptotically stable and the solution trajectory of \cref{eq:pdf-try-simple} will spin around $(x^*,\lambda^*)$ with high oscillation 
and thus converges dramatically slowly. 

The modified system \cref{eq:pdf-x} reads as follows
\begin{equation}\label{eq:pdf-simple}
	\left\{
	\begin{split}
		\lambda' = &\,e^t(x_1+x_1'-x_2-x_2'),\\
		x_1'= &-e^t(x_1^{p-1}+\lambda),\\
		x_2' = &-e^t(x_2^{p-1}-\lambda),
	\end{split}
	\right.
\end{equation}
and its ``linearization" at $(x^*,\lambda^*)$ is 
\[
\begin{pmatrix}
	\widehat{\lambda}\\	\widehat x
\end{pmatrix}'
=e^{t}\widehat{B}(t)
\begin{pmatrix}
	\widehat{\lambda}\\	\widehat x
\end{pmatrix}\quad\text{with}\quad \widehat{B}(t) = 
\begin{pmatrix}
	-2e^t&a\\-a^\top &O
\end{pmatrix}.
\]
Given any fixed time $t\geqslant \ln\sqrt{2}$, all the eigenvalues of $\widehat{B}(t)$ are
\[
\widehat b_1= 0,\quad
\widehat b_2=-\frac{2}{e^t+\sqrt{e^{2t}-2}}\quad\text{and}\quad
\widehat b_3=-e^t-\sqrt{e^{2t}-2}.
\]
From this, we observe more negativity of the real part of nonzero eigenvalues and hopefully the solution $(x,\lambda)$ to the modified system \cref{eq:pdf-simple} 
converges to $(x^*,\lambda^*)$ more quickly. 

In conclusion, our primal-dual flow \cref{eq:pdf-lambda} with subtle extrapolation $x+x'$ reduces the oscillation and accelerates the convergence; see Figure \ref{fig:p-6}. 
\begin{figure}[H]
	\centering
		\includegraphics[width=384pt,height=106pt]{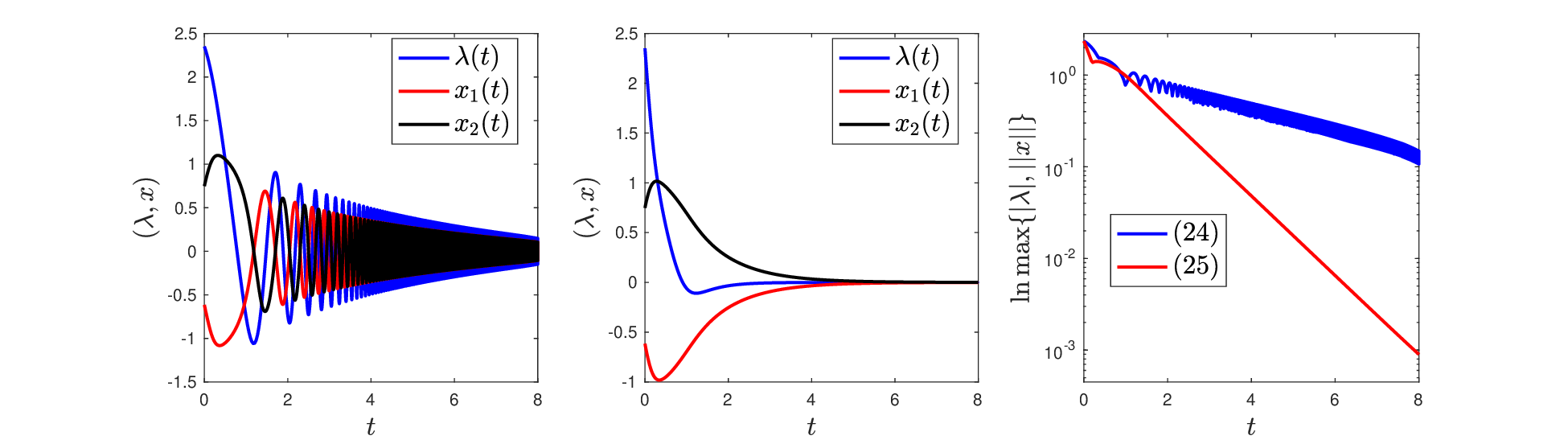}
	\caption{Solution trajectories (the left for \cref{eq:pdf-try-simple} and the medium for \cref{eq:pdf-simple}) and their errors (the right) with $p = 6$.}
	\label{fig:p-6}
\end{figure}
\section{An Implicit Scheme}
\label{sec:im}
From now on, we move to discrete level and consider general nonsmooth $\mu$-convex objective $f$ with $\mu\geqslant 0$. In this setting our primal-dual flow \cref{eq:pdf-x} becomes a differential inclusion
\begin{subnumcases}{}
	\gamma x'  \in-\partial_x \mathcal L(x,\lambda),
	\label{eq:pdf-x-non}\\
	\beta\lambda'  ={}\nabla_\lambda \mathcal L(x+x',\lambda),
	\label{eq:pdf-lambda-non}
\end{subnumcases}
where $\partial_x \mathcal L(x,\lambda) = \partial f(x) + A^\top\lambda$.
As discussed in \cref{rem:nonsmooth}, well-posedness of the solution to \cref{eq:pdf-lambda-non} in proper sense is left as a future topic. In what follows, we shall present new primal-dual algorithms  based on implicit Euler discretization (this section) and semi-implicit discretization (the next section), respectively. Similar with the continuous level, the tool of Lyapunov function plays important role in convergence rate analysis.
\subsection{Implicit discretization}
\label{sec:im-algo}
We first consider an implicit Euler scheme for \cref{eq:pdf-x-non}:
\begin{subnumcases}{}
	{}v_{k+1} = x_{k+1}+\frac{x_{k+1}-x_{k}}{\alpha_k},
	\label{eq:pdf-sys-im-v}\\
	\beta_k \frac{\lambda_{k+1}-\lambda_k}{\alpha_k} = {} \nabla_\lambda \mathcal L(v_{k+1},\lambda_{k+1}),		
	\label{eq:pdf-sys-im-lambda}\\
	\gamma_k \frac{x_{k+1}-x_k}{\alpha_k} \in- \partial_x\mathcal L(x_{k+1},\lambda_{k+1}),
	\label{eq:pdf-sys-im-x}
\end{subnumcases}
where $\alpha_k>0$ denotes the step size and the parameter system \cref{eq:beta} is also discretized implicitly 
\begin{equation}\label{eq:betak-im}
	\frac{\gamma_{k+1}-\gamma_{k}}{\alpha_k}={}\mu-\gamma_{k+1},\quad
	\frac{\beta_{k+1}-\beta_{k}}{\alpha_k}=-\beta_{k+1}.
\end{equation}

Let us transform the time discretization \cref{eq:pdf-sys-im-x} to a primal-dual algorithm. 
From \eqref{eq:pdf-sys-im-x} it follows that
\begin{equation}\label{eq:xk1}
	\frac{x_{k+1}-x_k+\theta_kA^\top\lambda_{k+1}}{\theta_k}\in-\partial f(x_{k+1}),
\end{equation}
where $\theta_k = \alpha_k/\gamma_k$.
Plugging \eqref{eq:pdf-sys-im-v} into \eqref{eq:pdf-sys-im-lambda} and using \cref{eq:betak-im}, we find
\begin{equation}\label{eq:lk1}
	\lambda_{k+1} =\lambda_k-\frac{1}{\beta_k}(Ax_k-b)
	+ \frac{1}{\beta_{k+1}}(Ax_{k+1}-b).
\end{equation}
Then, combining \cref{eq:xk1,eq:lk1} gives
\[
\frac{x_{k+1}-\widehat{x}_k}{\theta_k}+\frac{1}{\beta_{k+1}}A^\top(Ax_{k+1}-b)+A^\top\lambda_k\in-\partial f(x_{k+1}),
\]
where $\widehat{x}_k :=x_k
+\theta_k/\beta_kA^{\top}(Ax_k-b)$. Consequently, we obtain
\begin{subnumcases}{}
	x_{k+1}
	={}\mathop{\argmin}_{x\in\R^n}
	\left\{
	\mathcal L(x,\lambda_{k})+
	\frac{1}{2\beta_{k+1}}\nm{Ax-b}^2+
	\frac{1}{2\theta_k}\nm{x-\widehat{x}_k}^2
	\right\},
	\label{eq:pdf-sys-im-xk1-argmin}\\
	v_{k+1} = {}x_{k+1}+(x_{k+1}-x_k)/\alpha_k,\\
	\lambda_{k+1}={}\lambda_k + \alpha_k/\beta_k(Av_{k+1}-b).
	\label{eq:pdf-sys-im-lk1-argmin}
\end{subnumcases}
Note that in \cref{eq:pdf-sys-im-x} we used only the Lagrangian function $\mathcal L(x,\lambda)$ without the augmented term $\nm{Ax-b}^2$. But in \eqref{eq:pdf-sys-im-xk1-argmin}, the augmented term arises because $\lambda_{k+1}$ and $x_{k+1}$ are coupled with each other in the implicit discretization \cref{eq:pdf-sys-im-x}.

The method \cref{eq:pdf-sys-im-xk1-argmin} is very close the the proximal ALM and the key is to solve the subproblem \eqref{eq:pdf-sys-im-xk1-argmin} with respect to the primal variable $x_{k+1}$. On the other hand, from \cref{eq:xk1} we observe that $x_{k+1} = \proxi_{\theta_k f}(x_k-\theta_kA^{\top}\lambda_{k+1})$.
Putting this back to \cref{eq:pdf-sys-im-lk1-argmin} gives a nonlinear equation in terms of the multiplier $\lambda_{k+1}$:
\begin{equation}\label{eq:lk1-im-nonlinear-eq}
	\beta_{k+1}\lambda_{k+1} -A\proxi_{\theta_k f}\left(x_k-\theta_kA^{\top}\lambda_{k+1}\right)- z_k=0,
\end{equation}
where $z_k = \beta_{k+1}\left(\lambda_k-\beta_{k}^{-1}(Ax_k-b)\right)-b$. As discussed later in Section \ref{sec:semi-comp}, instead of computing $x_{k+1}$ from \eqref{eq:pdf-sys-im-xk1-argmin}, we apply the semi-smooth Newton method \cite{Facchinei2006} to  \cref{eq:lk1-im-nonlinear-eq} to obtain $\lambda_{k+1}$ and then update $x_{k+1}$.  
\begin{rem}
	\label{rem:compare-solver}
	For a better understanding of \cref{eq:pdf-sys-im-xk1-argmin} and \cref{eq:lk1-im-nonlinear-eq} , we give an operator perspective.
	Notice that \cref{eq:pdf-sys-im-v} is a nonlinear saddle-point type equation with respect to $x_{k+1}$ and $\lambda_{k+1}$:
	\begin{equation}\label{eq:A}
		\mathcal A
		\begin{pmatrix}
			x_{k+1}\\\lambda_{k+1}
		\end{pmatrix}
		= r_k\quad\text{where}\quad 
		\mathcal A = 
		\begin{pmatrix}
			\id+	\theta_k\partial f& A^\top\\
			-A&\beta_{k+1}\id
		\end{pmatrix}.
	\end{equation}
	Formally, we have the following  factorizations:
	\[
	\begin{aligned}
		\mathcal A = {}&
		\begin{pmatrix}
			\id&A^\top/\beta_{k+1}\\O&\id
		\end{pmatrix}
		\begin{pmatrix}
			\mathcal P&O\\O&\beta_{k+1}\id
		\end{pmatrix}
		\begin{pmatrix}
			\id&O\\-A/\beta_{k+1}&\id
		\end{pmatrix},\\
		\mathcal A	={}& 
		\begin{pmatrix}
			\id&O\\(\id+\theta_k\partial f)^{-1}(-A)&\id
		\end{pmatrix}
		\begin{pmatrix}
			\id+\theta_k\partial f&O\\O& \mathcal S
		\end{pmatrix}
		\begin{pmatrix}
			\id&(\id+\theta_k\partial f)^{-1}(-A^\top)\\O&\id
		\end{pmatrix},
	\end{aligned}
	\]
	where $\mathcal P = \id+\theta_k\partial f+A^\top A/\beta_{k+1}$ and 
	\[
	\mathcal S = \beta_{k+1}\id-A(\id+\theta_k\partial f)^{-1} (-A^\top) = \beta_{k+1}\id-A\prox_{\theta_kf} (-A^\top) 
	\]
	is nothing but the Schur complement. Hence, to solve \cref{eq:A}, we can compute 
	\[
	\mathcal P^{-1} = 	 \left(\id+\theta_k\partial f+A^\top A/\beta_{k+1}\right)^{-1},
	\]
	which corresponds to the augmented Lagrangian method \cref{eq:pdf-sys-im-xk1-argmin}. On the other hand, one can calculate
	\[
	\mathcal S^{-1} =\left( \beta_{k+1} \id-A\prox_{\theta_kf} (-A^\top) \right)^{-1},
	\]
	which is equivalent to solve the nonlinear equation \cref{eq:lk1-im-nonlinear-eq}. \qed
\end{rem}
Below the implicit scheme \cref{eq:pdf-sys-im-x} (i.e., the method \cref{eq:pdf-sys-im-xk1-argmin}) has been rewritten as an algorithm framework, which is called the {\it implicit primal-dual} (Im-PD) method. 
According to \cref{thm:conv-im} below, we have global linear rate $(1+\widehat{\alpha})^{-k}$ as long as the step size is bounded below $\alpha_k\geqslant\widehat{\alpha}>0$, and superlinear convergence follows if $\alpha_k\to\infty$. Note that this holds even for convex case $\mu=0$. In fact, the fully implicit scheme \cref{eq:pdf-sys-im-lambda} inherits the exponential decay \cref{eq:pdf-sys-EG} from the continuous level, and thus we have the contraction \cref{eq:diff-Ek-im} which has no restriction on the step size $\alpha_k$. Besides, the strong convexity constant $\mu$ of the objective $f$ is not necessarily needed since one can set $\mu=0$ in \cref{eq:pdf-sys-im-lambda} and this  leaves the final rate in \cref{thm:conv-im} unchanged.
\begin{algorithm}[H]
	\caption{Im-PD method for problem \cref{eq:min-f-Ax-b} with $f$ being $\mu$-convex ($\mu\geqslant 0$)}
	\label{algo:Im-PD}
	\begin{algorithmic}[1] 
		\REQUIRE  $\gamma_0>0,\,\beta_0>0,\, x_0
		\in\R^n,\,\lambda_0\in\R^m$.
		\FOR{$k=0,1,\ldots$}
		\STATE Choose the step size $\alpha_k>0$.
		\STATE Update $\displaystyle \beta_{k+1}= \beta_k/(1+\alpha_k)$ and $\displaystyle \gamma_{k+1} =(\mu\alpha_k+\gamma_k)/(1+\alpha_k)$.
		\STATE Set $\displaystyle 		\theta_{k}={}\alpha_k/\gamma_{k}$ and $\displaystyle z_k  ={} \beta_{k+1}\left(\lambda_k-\beta_k^{-1}(Ax_k-b)\right)-b$.
		\STATE Solve $\lambda_{k+1}$ from \cref{eq:lk1-im-nonlinear-eq}
		via the SsN iteration \eqref{eq:SsN} with the line search procedure \eqref{eq:line-search}.
		\STATE Update $x_{k+1}	 = \prox_{\theta_k f}\left(x_k-\theta_k A^{\top}\lambda_{k+1}\right)$.
		\ENDFOR
	\end{algorithmic}
\end{algorithm}
\subsection{Convergence rate}
We now prove the convergence rate of the implicit scheme \cref{eq:pdf-sys-im-lambda} (i.e. Algorithm \ref{algo:Im-PD}) via a discrete analogue to \cref{eq:EX}:
\begin{equation}\label{eq:Ek}
	\mathcal E_k:=
	\mathcal L(x_k,\lambda^*)-\mathcal L(x^*,\lambda_k)
	+\frac{\beta_k}{2}\nm{\lambda_k-\lambda^*}^2
	+\frac{\gamma_k}{2}\nm{x_k-x^*}^2,
\end{equation}
where $(x^*,\lambda^*)\in\Omega^*$ and $\{(x_k,\lambda_k,\gamma_k,\beta_k)\}\in\boldsymbol V$. 
\begin{lem}\label{thm:conv-im}
	Assume $f$ is $\mu$-convex with $\mu\geqslant 0$. Let $\{(x_k,\lambda_k,\gamma_k,\beta_k)\}$ be generated by  Algorithm \ref{algo:Im-PD} with arbitrary step size $\alpha_k>0$, then we have the contraction
	\begin{equation}\label{eq:diff-Ek-im}
		\mathcal E_{k+1}-	\mathcal E_k
		\leqslant -\alpha_k		\mathcal E_{k+1},
		\quad\text{for all }\,k\in\mathbb N.
	\end{equation}
	Moreover, there holds that
	\begin{numcases}{}
		{}\nm{Ax_{k}-b}\leqslant\mathcal R_0\times  \prod_{i=0}^{k-1}\frac{1}{1+\alpha_i},\label{eq:conv-Axk-b-im}\\
		0\leqslant \mathcal L(x_{k},\lambda^*)-	\mathcal L(x^*,\lambda_{k})\leqslant\mathcal E_0\times \prod_{i=0}^{k-1}\frac{1}{1+\alpha_i},
		\label{eq:conv-Lk-im}
		\\
		{}\snm{f(x_k)-f(x^*)}
		\leqslant \left(\mathcal E_0+\mathcal R_0\nm{\lambda^*}\right)\times \prod_{i=0}^{k-1}\frac{1}{1+\alpha_i},
		\label{eq:conv-fk-im}
	\end{numcases}
	where $\mathcal R_0:=
	\sqrt{2\beta_{0}\mathcal E_0}+\beta_{0}
	\nm{\lambda_0-\lambda^*}+\nm{Ax_0-b}$.
\end{lem}
\begin{proof}
	To prove \cref{eq:diff-Ek-im}, we mimic the continuous level (cf. Section \ref{sec:ode})
	but replace the derivative with the difference $		\mathcal E_{k+1}-\mathcal E_k = I_1+I_2+I_3$, where
	\begin{equation}\label{eq:Ii}
		\left\{
		\begin{split}
			I_1:={}&\mathcal L(x_{k+1},\lambda^*)-\mathcal L(x_{k},\lambda^*),\\
			I_2:=		{}&\frac{\beta_{k+1}}{2}
			\nm{\lambda_{k+1}-\lambda^*}^2 - 
			\frac{\beta_{k}}{2}
			\nm{\lambda_{k}-\lambda^*}^2,\\
			I_3:= 		{}&\frac{\gamma_{k+1}}{2}
			\nm{x_{k+1}-x^*}^2 - 
			\frac{\gamma_{k}}{2}
			\nm{x_{k}-x^*}^2.
		\end{split}
		\right.
	\end{equation}
	Let $p_{k+1}=	(x_{k+1}-(x_k-\theta_kA^\top\lambda_{k+1}))/\theta_k$, then by \cref{eq:xk1}, we have $A^\top\lambda^*-p_{k+1}\in\partial \mathcal L(x_{k+1},\lambda^*)$.
	Since $\mathcal L(\cdot,\lambda^*)$ is $\mu$-convex, by \cref{eq:mu-conv-L} we have
	\[
	I_1 
	\leqslant{} \dual{A^\top\lambda^*-p_{k+1},x_{k+1}-x_k}-\frac{\mu}{2}
	\nm{x_{k+1}-x_k}^2.
	\]
	Shift $\lambda^*$ to $\lambda_{k+1}$ and use the relation 
	\begin{equation}\label{eq:pk1}
		p_{k+1}-A^\top\lambda_{k+1} =x_{k+1}-x_k
	\end{equation}
	to lighten the previous estimate as follows
	\begin{equation}\label{eq:I1im}
		I_1 
		\leqslant
		-	 \dual{Ax_{k+1}-Ax_k,\lambda_{k+1}-\lambda^*},
	\end{equation}
	where we dropped the surplus negative term $-\nm{x_{k+1}-x_k}^2$.
	
	Then we focus on $I_2$ and $I_3$. By \cref{eq:betak-im}, a direct computation yields
	\[
	\begin{split}
		I_2
		={}&		-\frac{\alpha_k\beta_{k+1}}{2}\nm{\lambda_{k+1}-\lambda^*}^2
		+\frac{\beta_{k}}{2}
		\left(\nm{\lambda_{k+1}-\lambda^*}^2 - 
		\nm{\lambda_{k}-\lambda^*}^2\right)\\
		={}&		-\frac{\alpha_k\beta_{k+1}}{2}\nm{\lambda_{k+1}-\lambda^*}^2
		+\beta_{k}\dual{\lambda_{k+1}-\lambda_k,\lambda_{k+1}-\lambda^*}-\frac{\beta_{k}}{2}\nm{\lambda_{k+1}-\lambda_k}^2.
	\end{split}
	\]
	Plugging \eqref{eq:pdf-sys-im-lambda} into the second term and dropping the last negative term lead to
	\begin{equation}\label{eq:im-I2-est}
		I_2\leqslant-\frac{\alpha_k\beta_{k+1}}{2}\nm{\lambda_{k+1}-\lambda^*}^2+ \alpha_{k}\dual{Ax_{k+1}-b,\lambda_{k+1}-\lambda^*}+\dual{Ax_{k+1}-Ax_{k},\lambda_{k+1}-\lambda^*}.
	\end{equation}
	Similarly, we have
	\begin{equation}\label{eq:I3}
		\begin{split}
			I_3
			={}&
			\frac{\gamma_{k+1}-\gamma_k}{2}
			\nm{x_{k+1}-x^*}^2+	\frac{\gamma_{k}}{2}
			\left(\nm{x_{k+1}-x^*}^2-\nm{x_{k}-x^*}^2 
			\right)\\
			={}&		\frac{\alpha_k}{2}(\mu-\gamma_{k+1})
			\nm{x_{k+1}-x^*}^2+\gamma_{k}
			\dual{x_{k+1}-x_k, (x_{k+1}+x_{k})/2  -x^*}.
		\end{split}
	\end{equation}
	By \cref{eq:pk1}, we divide the last term by that
	\[
	\begin{aligned}
		\gamma_{k}
		\dual{x_{k+1}-x_k, (x_{k+1}+x_{k})/2  -x^*} 
		= {}&	\gamma_{k}
		\dual{x_{k+1}-x_k, x_{k+1} -x^*}
		-	\frac{\gamma_{k}}2
		\nm{x_{k+1}-x_k}^2\\
		=&-\alpha_k
		\dual{A^\top\lambda_{k+1}	-	p_{k+1}, x_{k+1} -x^*}
		-	\frac{\gamma_{k}}2\nm{x_{k+1}-x_k}^2.
	\end{aligned}
	\]
	Since \cref{eq:xk1} implies $	A^\top\lambda_{k+1}-	p_{k+1}\in\partial \mathcal L(x_{k+1},\lambda_{k+1})$, we obtain
	\[
	\begin{split}
		&-\alpha_k
		\dual{A^\top\lambda_{k+1}	-	p_{k+1}, x_{k+1} -x^*}\\
		\leqslant{}& \alpha_k(\mathcal L(x^*,\lambda_{k+1})-\mathcal L(x_{k+1},\lambda_{k+1}))
		-\frac{\mu\alpha_k}{2}\nm{x_{k+1}-x^*}^2\\
		={}& \alpha_k(\mathcal L(x^*,\lambda_{k+1})-\mathcal L(x_{k+1},\lambda^*))
		-\frac{\mu\alpha_k}{2}\nm{x_{k+1}-x^*}^2
		-\alpha_k\dual{Ax_{k+1}-b,\lambda_{k+1}-\lambda^*},
	\end{split}
	\]
	which promises the following bound 
	\begin{equation}\label{eq:I2im}
		\begin{split}
			I_3\leqslant {}&\alpha_k(\mathcal L(x^*,\lambda_{k+1})-\mathcal L(x_{k+1},\lambda^*))
			-\frac{\alpha_k\gamma_{k+1}}{2}\nm{x_{k+1}-x^*}^2\\
			{}&\quad-\alpha_k\dual{Ax_{k+1}-b,\lambda_{k+1}-\lambda^*}-	\frac{\gamma_{k}}2\nm{x_{k+1}-x_k}^2.
		\end{split}
	\end{equation}
	Consequently, combining \cref{eq:I1im,eq:im-I2-est,eq:I2im} proves \cref{eq:diff-Ek-im}.
	
	From \cref{eq:diff-Ek-im} we conclude that $	\mathcal E_k\leqslant  \mathcal E_0\times \prod_{i=0}^{k-1}\frac{1}{1+\alpha_i}$, which together with \cref{eq:Ek} implies \eqref{eq:conv-Lk-im} and that $\beta_0\nm{\lambda_k-\lambda^*}^2
	\leqslant 2\mathcal E_0$. Hence it is sufficient to prove 
	\eqref{eq:conv-Axk-b-im} and \eqref{eq:conv-fk-im}. From \cref{eq:lk1} follows that
	\begin{equation}\label{eq:lk-l0}
		\lambda_k-\frac1{\beta_k}(Ax_k-b) 
		= \lambda_0-\frac{1}{\beta_{0}}(Ax_0-b)
		\quad\text{for all } k\in\mathbb N.
	\end{equation}
	Then the estimate \eqref{eq:conv-Axk-b-im} is derived as below
	\[
	\begin{split}\nm{Ax_k-b}={}&\beta_k\nm{\lambda_k-\lambda_0
			+\beta_0^{-1}(Ax_0-b)}
		\leqslant{}
		\beta_k\nm{\lambda_k-\lambda_0}
		+\frac{\beta_k}{\beta_0}\nm{Ax_0-b}\\
		\leqslant {}&	\beta_k\nm{\lambda_k-\lambda^*}+\beta_k\nm{\lambda_0-\lambda^*}
		+\frac{\beta_k}{\beta_0}\nm{Ax_0-b}
		\leqslant \frac{\beta_k}{\beta_0}\mathcal R_0.
	\end{split}
	\]
	In addition, it is clear that
	\[
	0\leqslant 	\mathcal L(x_k,\lambda^*)-\mathcal L(x^*,\lambda_k)
	=f(x_k)-f(x^*)+\dual{\lambda^*,Ax_k-b}
	\leqslant
	\mathcal L(x_k,\lambda^*)-
	\mathcal L(x^*,\lambda_k),
	\]
	and thus
	\[
	\snm{f(x_k)-f(x^*)}\leqslant \nm{\lambda^*}\nm{Ax_k-b}+
	\mathcal L(x_k,\lambda^*)-
	\mathcal L(x^*,\lambda_k)
	\leqslant \frac{\beta_k}{\beta_0}\left(
	\mathcal{E}_0+\nm{\lambda^*}\mathcal R_0
	\right).
	\]
	This establishes \eqref{eq:conv-fk-im} and completes the proof of this theorem.
\end{proof}
\section{Composite Optimization}
\label{sec:comp}
In this section, we move to the composite case 
\begin{equation}\label{eq:min-h-g-Ax-b}
	\mathop{\min}_{x\in\R^n} f(x) = h(x)+g(x)\quad {\rm s.t.~} Ax = b,
\end{equation}
where $h$ is $L$-smooth and $\mu$-convex with $\mu\geqslant 0$ and $g$ is properly closed convex (possibly nonsmooth). Instead of the fully implicit scheme \cref{eq:pdf-sys-im-lambda}, to utilize the composite structure of $f = g+h$, we adopt a semi-implicit discretization that corresponds to the operator splitting (also known as the forward-backward technique). Note also that if $h$ is only convex but the nonsmooth part $g$ is $\mu$-convex, then we can always consider $f = \widehat{h} + \widehat{g}$ with $\widehat{h}(x) = h(x)+\mu/2\nm{x}^2$ and $\widehat{g}(x) = g(x) -\mu/2\nm{x}^2$, which agrees with the current assumption for \cref{eq:min-h-g-Ax-b} and $\prox_{\widehat{g}}$ can be computed by $\prox_{ g}$ (cf. \cite[Section 2.2]{parikh_proximal_2014}).
\subsection{A semi-implicit primal-dual proximal gradient method}
\label{sec:semi-comp}
Based on \cref{eq:pdf-sys-im-lambda}, we replace $\partial_x\mathcal L(x_{k+1},\lambda_{k+1})$ with $\nabla h(x_k)+\partial g(x_{k+1})+A^{\top}\lambda_{k+1}$ to obtain
\begin{subnumcases}{}
	{}v_{k+1} = x_{k}+\frac{x_{k+1}-x_{k}}{\alpha_k},
	\label{eq:pdf-sys-semi-v}\\
	\beta_{k+1} \frac{\lambda_{k+1}-\lambda_k}{\alpha_k} = {} \nabla_\lambda \mathcal L(	v_{k+1},\lambda_{k+1}),		
	\label{eq:pdf-sys-semi-lambda-comp}\\
	\gamma_{k+1} \frac{x_{k+1}-x_k}{\alpha_k} \in 
	-\nabla h(x_k)-\partial g(x_{k+1})-A^{\top}\lambda_{k+1},
	\label{eq:pdf-sys-semi-x-comp}
\end{subnumcases}
where the parameter system \cref{eq:beta} is discretized explicitly by 
\begin{equation}\label{eq:betak-ex}
	\frac{\gamma_{k+1}-\gamma_{k}}{\alpha_k}={}\mu-\gamma_{k},\quad
	\frac{\beta_{k+1}-\beta_{k}}{\alpha_k}=-\beta_{k}.
\end{equation}
Similar as before, we can rewrite \cref{eq:pdf-sys-semi-lambda-comp}
as a primal-dual formulation:
\begin{subnumcases}{}
	x_{k+1}
	={}\mathop{\argmin}_{x\in\R^n}
	\left\{f(x)+\dual{\nabla h(x_k)+A^\top\lambda_k,x}+
	\frac{1}{2\beta_{k+1}}\nm{Ax-b}^2+
	\frac{1}{2\eta_k}\nm{x-\widehat x_k}^2
	\right\},
	\label{eq:pdf-sys-semi-xk1-argmin}\\
	v_{k+1} = {}x_{k}+(x_{k+1}-x_k)/\alpha_k,\\
	\lambda_{k+1}={}\lambda_k + \alpha_k/\beta_{k+1}(Av_{k+1}-b),
	\label{eq:pdf-sys-semi-lk1-argmin}
\end{subnumcases}
where $\eta_k = \alpha_k/\gamma_{k+1}$ and $\widehat x_k =x_k
+\eta_k/\beta_{k}A^{\top}(Ax_k-b)$. In \eqref{eq:pdf-sys-semi-xk1-argmin}, the smooth part $h$ has been linearized while the nonsmooth part $g$ uses implicit discretization. This is similar with the proximal gradient method \cite{palomar_gradient-based_2009,parikh_proximal_2014}, and we have to impose proper restriction on the step size $\alpha_k$ (see Algorithm \ref{algo:Semi-PDPG}).

Notice also that the subproblem \eqref{eq:pdf-sys-semi-xk1-argmin} with respect to the primal variable $x_{k+1}$ is not easy to solve. From \eqref{eq:pdf-sys-semi-x-comp} we have $x_{k+1}	 = \prox_{\eta_k g}\left(x_k-\eta_k\nabla h(x_k)-\eta_k A^{\top}\lambda_{k+1}\right)$, and putting this into \eqref{eq:pdf-sys-semi-lk1-argmin} gives
\begin{equation}
	\label{eq:lk1-semi-comp-prox}
	\beta_{k+1}\lambda_{k+1} - A \prox_{\eta_k g}\left(y_k-\eta_k A^{\top}\lambda_{k+1}\right) =z_k,
\end{equation}
where $y_k={}x_k-\eta_k\nabla h(x_k)$ and $ z_k  ={} \beta_{k+1}\left(\lambda_k-\beta_k^{-1}(Ax_k-b)\right)-b$. Below, we present a semi-smooth Newton method to solve the nonlinear equation \cref{eq:lk1-semi-comp-prox} in terms of the multiplier $\lambda_{k+1}$. This can be very efficient for some practical cases that (i) the multiplier has lower dimension than the primal variable; (ii) the problem \cref{eq:lk1-semi-comp-prox} itself possesses some nice properties such as semi-smoothness and simple closed proximal formulation of $g$; (iii) efficient iterative methods for updating  the Newton direction can be considered if there has sparsity. 
\subsubsection{A semi-smooth Newton method for the subproblem \cref{eq:lk1-semi-comp-prox}}
\label{sec:SsN}
Define a mapping $F_k:\R^m\to\R^m$ by that
\begin{equation}\label{eq:Fk}
	F_k(\lambda) := \beta_{k+1}\lambda - A \prox_{\eta_k g}\left(y_k-\eta_k A^{\top}\lambda\right) -z_k\quad\forall\,\lambda\in\R^m.
\end{equation}
Then \eqref{eq:lk1-semi-comp-prox} is equivalent to $F_k(\lambda_{k+1}) = 0$. By Moreau's identity (cf. \cite[Theorem 6.45]{Scheinberg}) 
\begin{equation}\label{eq:moreau-id}
	\prox_{\eta g}(x)+\eta\prox_{g^*/\eta}(x/\eta) =x,
\end{equation}
where $g^*$ denotes the conjugate function of $g$, we find 
that $F_k(\lambda) = \nabla \mathcal F_k(\lambda)$, where 
\begin{equation}\label{eq:cal-Fk}\small
	\begin{split}
		\mathcal F_k(\lambda) := {}&
		\frac{\beta_{k+1}}{2}\nm{\lambda}^2-\dual{z_k,\lambda}
		+g^*\left(\prox_{g^*/\eta_k}(y_k/\eta_k-A^{\top}\lambda)\right)
		+\frac{1}{2\eta_k}\nm{\prox_{\eta_k g}(y_k-\eta_kA^{\top}\lambda)}^2.
	\end{split}
\end{equation}
%%%%%%%%%%%%%%%%%%%KEEP THIS PART%%%%%%%%%%%%%
%Let $g_{\eta}$ be the Moreau--Yosida approximation of $g$ with $\eta>0$:
%\begin{equation}\label{eq:MY}
%	g_{\eta}(x): = \min_{y\in\R^n}\left\{g(y)  +\frac{1}{2\eta}\nm{y-x}^2
%	\right\}\quad\forall\,x\in\R^n.
%\end{equation}
%Then, invoking the formula $\nabla g_{\eta_k}(x) = (x-\prox_{\eta_kg}(x))/\eta_k$ we also have
%\begin{equation}\label{eq:cal-Fk-equi}
%	\begin{split}
%		\mathcal F_k(\lambda) = {}&
%		\frac{\beta_{k+1}}{2}\nm{\lambda}^2-\dual{z_k,\lambda}
%		+\frac{1}{2\eta_k}\nm{y_k-\eta_kA^{\top}\lambda}^2-g_{\eta_k}\left(y_k-\eta_kA^{\top}\lambda\right).
%	\end{split}
%\end{equation}
%%%%%%%%%%%%%%%%%%%KEEP THIS PART%%%%%%%%%%%%%

Let $\partial \prox_{\eta_k g}(\lambda)$ be the generalized Clarke subdifferential \cite{clarke_optimization_1987} of $\prox_{\eta_k g}(\lambda)$. If $P_k(\lambda)\in\partial \prox_{\eta_k g}\left(y_k-\eta_k A^{\top}\lambda\right)$ is symmetric (this is indeed true when $g$ is either the indicator function or the support function for some nonempty convex polyhedral \cite{Han1997}), then for any $\lambda\in\R^m$ we can define an SPD matrix
\begin{equation}\label{eq:JFk-semi-comp}
	JF_k(\lambda) := \beta_{k+1}I+\eta_kAP_k(\lambda)A^{\top}\in\R^{m\times m}.
\end{equation}
The semi-smooth Newton (SsN) method for solving \eqref{eq:lk1-semi-comp-prox} reads as follows: given an initial guess $\lambda^0\in\R^m$, do the iteration 
\begin{equation}\label{eq:SsN}
	\lambda^{j+1} = \lambda^j-\left[JF_k(\lambda^j)\right]^{-1}F_k(\lambda^j),\quad j\geq 0.
\end{equation}
Theoretically, it possesses local superlinear convergence provided that $F_k$ is semismooth \cite{Qi1993,Qi1993a}. Practically, it can be terminated under some suitable criterion and for global convergence,  a line search procedure \cite{dennis_numerical_1996} shall be supplemented: given a Newton direction $d^j = -\left[JF_k(\lambda^j)\right]^{-1}F_k(\lambda^j)$ at step $j$, find the smallest nonnegative integer $r\in\mathbb N$ such that
\begin{equation}\label{eq:line-search}
	\mathcal F_k(\lambda^j+\delta^rd^j)\leqslant \mathcal F_k(\lambda^j)+\nu\delta^r\dual{F_k(\lambda^j),d^j},
\end{equation}
where $\nu\in(0,1/2),\,\delta\in(0,1]$ and $\mathcal F_k$ has been defined in \cref{eq:cal-Fk}. Generally the inverse 
operation $\left[JF_k(\lambda^j)\right]^{-1}$ in \cref{eq:SsN} shall be approximated by 
some iterative process such as the (preconditioned) conjugate 
gradient method \cite{saad_iterative_2003}. For more discussions about the linear solver for $d^j$, we refer to Section \ref{sec:PCG-l1l2}.

Below we summarize the semi-implicit scheme \cref{eq:pdf-sys-semi-x-comp} as an algorithm framework, which is called the {\it semi-implicit primal-dual proximal gradient} (Semi-PDPG) method. As suggested later by \cref{thm:conv-semi-comp}, the step size $\alpha_k$ is determined simply by $\alpha_k(L+\gamma_{k+1}) = \gamma_{k+1}$, which promises the convergence rate $\mathcal O(\min\{L/k,(1+\mu/L)^{-k}\})$ (cf. \cref{eq:conv-comp}).
\begin{algorithm}[H]
	\caption{Semi-PDPG method for \cref{eq:min-h-g-Ax-b} with $h$ being $L$-smooth and $\mu$-convex $(\mu\geqslant 0)$}
	\label{algo:Semi-PDPG}
	\begin{algorithmic}[1] 
		\REQUIRE  $\gamma_0>0,\,\beta_0>0,\, x_0
		\in\R^n,\,\lambda_0\in\R^m$.
		\FOR{$k=0,1,\ldots$}
		\STATE Set $\sigma_k=L+2\gamma_k-\mu$ and $\Delta_k=\sigma_k+\sqrt{\sigma_k^2+4\gamma_k(\mu-\gamma_k)}$.
		\STATE Compute the step size $\alpha_k = 2\gamma_{k}/\Delta_k\in(0,1)$.
		\STATE Update $\displaystyle \beta_{k+1}= \beta_k(1-\alpha_k)$ and $\displaystyle \gamma_{k+1} =\mu\alpha_k+(1-\alpha_k)\gamma_k$.
		\STATE Set $\displaystyle 		\eta_{k}={}\alpha_k/\gamma_{k+1}$ and $y_k=x_k-\eta_k\nabla h(x_k)$.
		\STATE  Set $\displaystyle z_k  ={} \beta_{k+1}\left(\lambda_k-\beta_k^{-1}(Ax_k-b)\right)-b$.
		\STATE Solve $\lambda_{k+1}$ from \eqref{eq:lk1-semi-comp-prox}
		via the SsN iteration \eqref{eq:SsN} with the line search procedure \eqref{eq:line-search}.
		\STATE Update $x_{k+1}	 = \prox_{\eta_k g}\left(y_k-\eta_k A^{\top}\lambda_{k+1}\right)$.
		\ENDFOR
	\end{algorithmic}
\end{algorithm}
\subsection{Proof of the convergence rate }
To move on, the following  two lemmas are needed.
\begin{lem}\label{lem:key-gd-map}
	Assume	$h$ is $L$-smooth and $\mu$-convex with $\mu\geqslant 0$ and $g$ is properly closed convex. 
	Let $\{(x_k,\lambda_k,\gamma_k,\beta_k)\}$ be 
	generated by \cref{eq:pdf-sys-semi-lambda-comp,eq:betak-ex}, then for all $y\in\R^n$,
	\begin{equation}\label{eq:coer-est}
		\begin{split}
			{}&\mathcal L(x_{k+1},\lambda_{k+1})-\mathcal L(y,\lambda_{k+1})	
			+\frac{\gamma_{k+1}}{\alpha_k}\dual{x_{k+1}-x_k,x_{k}-y}\\
			\leqslant{}&	-\frac{\mu}{2}\nm{y-x_{k}}^2
			+\frac{L\alpha_k-2\gamma_{k+1}}{2\alpha_k}\nm{x_{k+1}-x_k}^2.
		\end{split}
	\end{equation}
\end{lem}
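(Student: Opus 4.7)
The plan is to combine the subgradient inclusion coming from the semi-implicit step for $x_{k+1}$ with the convexity inequalities for $h$ and $g$, then repackage the result into the Lagrangian and rearrange one inner product using the parallelogram-type identity $x_{k+1}-y=(x_{k+1}-x_k)+(x_k-y)$.

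First I would unpack \cref{eq:pdf-sys-semi-x-comp} as
\[
-\nabla h(x_k)-A^{\top}\lambda_{k+1}-\frac{\gamma_{k+1}}{\alpha_k}(x_{k+1}-x_k)\in\partial g(x_{k+1}),
\]
and apply convexity of $g$ at $x_{k+1}$ to obtain, for any $y\in\R^n$,
\[
g(x_{k+1})-g(y)\leqslant \dual{-\nabla h(x_k)-A^{\top}\lambda_{k+1}-\tfrac{\gamma_{k+1}}{\alpha_k}(x_{k+1}-x_k),\,x_{k+1}-y}.
\]
Next I would treat $h$ by adding two standard estimates: the descent lemma (from $L$-smoothness)
\[
h(x_{k+1})-h(x_k)\leqslant \dual{\nabla h(x_k),x_{k+1}-x_k}+\tfrac{L}{2}\nm{x_{k+1}-x_k}^2,
\]
and the $\mu$-convexity inequality expanded at $x_k$
\[
h(x_k)-h(y)\leqslant \dual{\nabla h(x_k),x_k-y}-\tfrac{\mu}{2}\nm{y-x_k}^2.
\]
Summing the two yields a bound for $h(x_{k+1})-h(y)$ in which the gradient $\nabla h(x_k)$ is paired with $x_{k+1}-y$.

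Then I would add this $h$-bound to the $g$-bound. Crucially, the two copies of $\dual{\nabla h(x_k),x_{k+1}-y}$ have opposite signs and cancel exactly. This gives
\[
f(x_{k+1})-f(y)\leqslant-\dual{A^{\top}\lambda_{k+1}+\tfrac{\gamma_{k+1}}{\alpha_k}(x_{k+1}-x_k),\,x_{k+1}-y}+\tfrac{L}{2}\nm{x_{k+1}-x_k}^2-\tfrac{\mu}{2}\nm{y-x_k}^2.
\]
Moving the term $\dual{A^{\top}\lambda_{k+1},x_{k+1}-y}=\dual{\lambda_{k+1},A(x_{k+1}-y)}$ to the left assembles exactly $\mathcal L(x_{k+1},\lambda_{k+1})-\mathcal L(y,\lambda_{k+1})$ on that side.

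Finally, I would split
\[
\dual{x_{k+1}-x_k,x_{k+1}-y}=\nm{x_{k+1}-x_k}^2+\dual{x_{k+1}-x_k,x_k-y},
\]
move the $\dual{x_{k+1}-x_k,x_k-y}$ term to the left-hand side to match the target, and collect the coefficient of $\nm{x_{k+1}-x_k}^2$ as $L/2-\gamma_{k+1}/\alpha_k=(L\alpha_k-2\gamma_{k+1})/(2\alpha_k)$, which is precisely \cref{eq:coer-est}. The argument is essentially linear bookkeeping; the only subtle point, and the step I would double-check carefully, is ensuring that $\mu$-convexity is applied at the base point $x_k$ rather than $x_{k+1}$, so that the produced quadratic term has the correct form $-\tfrac{\mu}{2}\nm{y-x_k}^2$ and so that $\nabla h(x_k)$ cancels cleanly between the $h$ and $g$ estimates.
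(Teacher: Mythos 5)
Your proposal is correct and follows essentially the same route as the paper: the paper bundles $h$ and the linear term into $\phi(x)=h(x)+\dual{\lambda_{k+1},Ax-b}$ and applies the descent lemma, $\mu$-convexity at $x_k$, and the subgradient inequality for $g$ at $x_{k+1}$ to $\phi$ and $g$, whereas you keep $\nabla h(x_k)$ and $A^{\top}\lambda_{k+1}$ separate — a purely cosmetic difference since $\phi-h$ is affine. All the cancellations and the final bookkeeping of the coefficient $(L\alpha_k-2\gamma_{k+1})/(2\alpha_k)$ check out.
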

\begin{proof}
	Define $	\phi(x):=	h(x) + \dual{\lambda_{k+1},Ax-b}$ for all $x\in\R^n$.
	As $h$ is $L$-smooth and $\mu$-convex, there holds that
	\[
	\begin{split}
		\phi(x_k)-\phi(y)+\dual{\nabla \phi(x_k),y-x_k}
		\leqslant&-\frac{\mu}{2}\nm{y-x_k}^2,\\
		\phi(x_{k+1})-	\phi(x_k)-\dual{\nabla \phi(x_k),x_{k+1}-x_k}
		\leqslant{}& \frac{L}{2}\nm{x_{k+1}-x_k}^2.
	\end{split}
	\]
	In addition, by \eqref{eq:pdf-sys-semi-x-comp}, we have
	\[
	\gamma_{k+1} \frac{x_k-x_{k+1}}{\alpha_k} -\nabla \phi(x_k)\in\partial g(x_{k+1}),
	\]
	and it follows that
	\[
	\begin{split}
		{}&	g(x_{k+1})-g(y)\leqslant \dual{		\gamma_{k+1} \frac{x_k-x_{k+1}}{\alpha_k} 
			-\nabla \phi(x_k),x_{k+1}-y}\\
		={}&\frac{\gamma_{k+1}}{\alpha_k}\dual{x_k-x_{k+1},x_{k}-y} -
		\dual{	\nabla \phi(x_k),x_{k+1}-y}-\frac{\gamma_{k+1}}{\alpha_k}\nm{x_{k+1}-x_k}^2.
	\end{split}
	\]
	Collecting the above estimates and using the fact $\mathcal L(\cdot,\lambda) = \phi(\cdot)+g(\cdot)$, we obtain \cref{eq:coer-est} and conclude the proof.
\end{proof}
Recall $\gamma_{\min}$ defined in \cref{eq:bd-gama} and for later use we set $	\gamma_{\max}: = \max\{\gamma_0,\mu\}$.
\begin{lem}
	\label{lem:rate-bk}	
	Let $\{(\gamma_k,\beta_k)\}$ be defined by \cref{eq:betak-ex} with $\alpha_k(L+\gamma_{k+1}) \leqslant 2\gamma_{k+1}$, then $\alpha_k\in(0,1]$ for all $k\in\mathbb N$. Moreover, if $\alpha_k(L+\gamma_{k+1}) =\gamma_{k+1}$ then
	\begin{equation}\label{eq:rate-bk}
		\prod_{i=0}^{k-1}(1-\alpha_i)\leqslant \min\left\{
		\frac{L+\gamma_{\max}}{\gamma_0 k+L+\gamma_{\max}}
		,\,
		\left(
		\frac{L}{L+\gamma_{\min}}
		\right)^k
		\right\}.
	\end{equation}
\end{lem}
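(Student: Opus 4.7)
The lemma splits into two parts, handled by induction and a clean algebraic solve respectively. Both rest on the parameter recursion $\gamma_{k+1} = (1-\alpha_k)\gamma_k + \alpha_k\mu$ and $\beta_{k+1} = (1-\alpha_k)\beta_k$ coming from \eqref{eq:betak-ex}.

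For Part 1, my plan is induction on $k$ with the invariant $\gamma_k \in [\gamma_{\min}, \gamma_{\max}]$, true at $k = 0$. Assuming this, I would rewrite the hypothesis as $\alpha_k L \leq (2-\alpha_k)\gamma_{k+1}$ and argue by contradiction that $\alpha_k > 1$ is impossible. The sign $(1-\alpha_k)\gamma_k < 0$ yields $\gamma_{k+1} < \alpha_k\mu$; if $\gamma_{k+1} \leq 0$ the right-hand side is nonpositive while the left is strictly positive, a contradiction, and the remaining case $\alpha_k \in (1, 2)$ with $\gamma_{k+1} > 0$ gives $\alpha_k L < (2-\alpha_k)\alpha_k\mu$, i.e. $L < (2-\alpha_k)\mu < \mu$, contradicting the standard inequality $\mu \leq L$ for an $L$-smooth $\mu$-convex $h$. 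Hence $\alpha_k \in (0, 1]$ and $\gamma_{k+1}$ is a convex combination of $\gamma_k$ and $\mu$, preserving the invariant.

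For Part 2, the equality $\alpha_k(L+\gamma_{k+1}) = \gamma_{k+1}$ solves as
\begin{equation*}
\alpha_k = \frac{\gamma_{k+1}}{L+\gamma_{k+1}}, \qquad 1-\alpha_k = \frac{L}{L+\gamma_{k+1}},
\end{equation*}
so $\prod_{i=0}^{k-1}(1-\alpha_i) = L^k / \prod_{i=1}^k (L + \gamma_i)$. The exponential bound $(L/(L+\gamma_{\min}))^k$ is immediate from $\gamma_i \geq \gamma_{\min}$. For the sublinear bound, I would identify a monotone quantity: a short calculation gives
\begin{equation*}
\frac{\gamma_{k+1}}{\beta_{k+1}} - \frac{\gamma_k}{\beta_k} = \frac{\gamma_{k+1} - (1-\alpha_k)\gamma_k}{\beta_{k+1}} = \frac{\alpha_k\mu}{\beta_{k+1}} \geq 0,
\end{equation*}
so $\gamma_k/\beta_k \geq \gamma_0/\beta_0$ for all $k$. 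Combined with $\gamma_{k+1} \leq \gamma_{\max}$, the telescoping
\begin{equation*}
\frac{1}{\beta_{k+1}} - \frac{1}{\beta_k} = \frac{\alpha_k}{\beta_{k+1}} = \frac{\gamma_{k+1}}{\beta_{k+1}(L+\gamma_{k+1})} \geq \frac{\gamma_0}{\beta_0(L+\gamma_{\max})}
\end{equation*}
summed from $0$ to $k-1$ yields $1/\beta_k \geq (L + \gamma_{\max} + k\gamma_0)/[\beta_0(L + \gamma_{\max})]$. Inverting and using $\beta_k/\beta_0 = \prod_{i=0}^{k-1}(1-\alpha_i)$ gives the sublinear bound.

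The main obstacle is the case analysis in Part 1: the hypothesis is a single inequality that does not directly bound $\alpha_k$, so one must carefully combine the inductive positivity of the scaling factor $\gamma_{k+1}$ with $\mu \leq L$ to rule out $\alpha_k > 1$. For Part 2, the essential insight is that $\gamma_k/\beta_k$ is monotone increasing; this supplies the uniform lower bound that converts the recursion for $1/\beta_k$ into a $1/k$-type accumulation rather than merely geometric growth.
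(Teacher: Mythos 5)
Your Part~2 is correct and is essentially the paper's own argument: the same monotone quantity $\gamma_k/\beta_k\geqslant\gamma_0/\beta_0$ (the paper derives it from $\gamma_{k+1}/\gamma_k\geqslant 1-\alpha_k=\beta_{k+1}/\beta_k$, you from the explicit increment $\alpha_k\mu/\beta_{k+1}\geqslant0$), the same telescoping of $1/\beta_{k+1}-1/\beta_k=\alpha_k/\beta_{k+1}\geqslant \gamma_0/(\beta_0(L+\gamma_{\max}))$, and the same use of $\gamma_{k+1}\geqslant\gamma_{\min}$ for the geometric factor. Your inductive invariant $\gamma_k\in[\gamma_{\min},\gamma_{\max}]$ is in fact needed (and left implicit) by the paper as well.

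Part~1 is where you diverge, and there is a genuine gap. Your contradiction argument asserts that when $\alpha_k>1$ and $\gamma_{k+1}\leqslant0$ the quantity $(2-\alpha_k)\gamma_{k+1}$ is nonpositive; that holds only for $\alpha_k\leqslant2$. For $\alpha_k>2$ and $\gamma_{k+1}<0$ it is a product of two negative numbers and can dominate $\alpha_kL$: take $\mu=0$, $\gamma_k=1$, $L=1$, $\alpha_k=10$, so $\gamma_{k+1}=-9$ and $\alpha_k(L+\gamma_{k+1})=-80\leqslant-18=2\gamma_{k+1}$. Thus the hypothesis of the lemma is satisfiable with $\alpha_k\notin(0,1]$, and the literal ``every $\alpha_k$ obeying the inequality lies in $(0,1]$'' claim you set out to prove is false; no patch of the case analysis alone can rescue it. The paper sidesteps this by proving an \emph{existence} statement instead: substituting $\gamma_{k+1}=\gamma_k+\alpha_k(\mu-\gamma_k)$ turns the constraint into $\psi_k(\alpha_k)\leqslant0$ with $\psi_k(\alpha)=(\mu-\gamma_k)\alpha^2+(L+3\gamma_k-2\mu)\alpha-2\gamma_k$, and from $\psi_k(0)=-2\gamma_k<0$ and $\psi_k(1)=L-\mu\geqslant0$ it extracts a root $\alpha^*\in(0,1]$ and restricts the admissible step sizes to $(0,\alpha^*]$, which is what \cref{algo:Semi-PDPG} actually uses. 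Your route does close up if you impose the standing positivity $\gamma_{k+1}>0$ (required anyway for $\eta_k=\alpha_k/\gamma_{k+1}$ to be a step size): then $(2-\alpha_k)\gamma_{k+1}\geqslant\alpha_kL>0$ forces $\alpha_k<2$, and your two remaining cases, together with $\mu\leqslant L$, finish the argument. You should either state that extra assumption or switch to the paper's root-location formulation.
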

\begin{proof}
	Let us first verify the existence of the sequence $\{\alpha_k\} \subset(0,1]$. 
	As $\alpha_k(L+\gamma_{k+1}) \leqslant 2\gamma_{k+1}$ and $	\gamma_{k+1} =\gamma_{k}+\alpha_k(\mu-\gamma_k)$ (cf. \cref{eq:betak-ex}), we obtain $\psi_k(\alpha_k):=	(\mu-\gamma_k)\alpha_k^2
	+(L+3\gamma_k-2\mu)\alpha_k-2\gamma_{k}\leqslant 0$. As $\gamma_0>0$, we have $\psi_0(0)=-2\gamma_{0}<0$ and $\psi_0(1)=L-\mu\geqslant  0$. Hence, there must be at least one (actually unique) root $\alpha^*\in(0,1]$ 
	of $\psi_0(\alpha)=0$. Hence, any $\alpha_0\in(0,\alpha^*]$ satisfies $\alpha_0(L+\gamma_{1})\leqslant 2\gamma_{1}$.
	Repeating this process for $\psi_k(\alpha)$ and noticing that $\gamma_k>0$ yield the existence 
	of $\alpha_k\in(0,1]$ for all $k\geqslant 1$.
	
	From \cref{eq:betak-ex} we have $\beta_{k} = \beta_0 		\prod_{i=0}^{k-1}(1-\alpha_i)$. 
	It remains to investigate the asymptotic decay behavior of $\beta_{k}$ with $\alpha_k(L+\gamma_{k+1}) =\gamma_{k+1}$. 
	Let us start from the identity
	\[
	\frac{1}{\beta_{k+1}}-\frac{1}{\beta_{k}}
	=\frac{\beta_{k}-\beta_{k+1}}{\beta_{k} \beta_{k+1}}
	=\frac{\alpha_{k}}{\beta_{k+1}}.
	\]
	Besides, we have
	\[
	\frac{\gamma_{k+1}}{\gamma_{k}}\geqslant 1-\alpha_k = 
	\frac{\beta_{k+1}}{\beta_{k}}\quad \Longrightarrow\quad
	\gamma_{k}\geqslant \frac{\gamma_0}{\beta_0}\beta_{k}.
	\]
	It follows from this and the relation $	\alpha_k(L+\gamma_{k+1}) = \gamma_{k+1}$ that 
	\[
	\frac{1}{\beta_{k+1}}-\frac{1}{\beta_{k}}\geqslant 
	\frac{\gamma_0\alpha_{k}}{\beta_0\gamma_{k+1}} = 
	\frac{\gamma_0}{\beta_0 (L+\gamma_{k+1})}
	\geqslant 	\frac{\gamma_0}{\beta_0 (L+\gamma_{\max})}.
	\]
	Hence, we get 
	\begin{equation}\label{eq:est-betak-1}
		\frac{	\beta_{k}}{\beta_{0}}\leqslant 
		\frac{L+\gamma_{\max}}{\gamma_0 k+L+\gamma_{\max}}.
	\end{equation}
	On the other hand, since $\gamma_{k+1}\geqslant \gamma_{\min}$, we have $\alpha_k= \gamma_{k+1}/(L+\gamma_{k+1})\geqslant
	\gamma_{\min}/(L+\gamma_{\min})$.
	Therefore, another bound follows
	\[
	\frac{	\beta_{k}}{\beta_{0}} = \prod_{i=0}^{k-1}(1-\alpha_i)\leqslant 
	\left(
	\frac{L}{L+\gamma_{\min}}
	\right)^k.
	\]
	Combining this with \cref{eq:est-betak-1} establishes \cref{eq:rate-bk} and completes the proof of this lemma.
\end{proof}
We now prove the convergence rate of Algorithm \ref{algo:Semi-PDPG} 
by using the Lyapunov function \eqref{eq:Ek}. 
\begin{thm}\label{thm:conv-semi-comp}
	Assume	$h$ is $L$-smooth and $\mu$-convex with $\mu\geqslant 0$ and $g$ is properly closed convex. Let $\{(x_k,\lambda_k,\gamma_k,\beta_k)\}$ be 
	generated by \cref{eq:pdf-sys-semi-lambda-comp,eq:betak-ex} with $\alpha_k(L+\gamma_{k+1}) \leqslant 2\gamma_{k+1}$, then we have $\{\alpha_k\}\subset(0,1]$ and 
	\begin{equation}\label{eq:conv-semi-comp}
		\mathcal E_{k+1}-\mathcal E_k
		\leqslant-\alpha_k\mathcal E_{k},
		\quad\text{for all }\,k\in\mathbb N.
	\end{equation}
	Moreover, if $\alpha_k(L+\gamma_{k+1}) =\gamma_{k+1}$, then it holds that
	\begin{equation}\label{eq:conv-comp}\small
		{}\mathcal L(x_{k},\lambda^*)-	\mathcal L(x^*,\lambda_{k})+\snm{F(x_k)-F(x^*)}+\nm{Ax_{k}-b}\leqslant C_0\times 	\min\left\{
		\frac{L+\gamma_{\max}}{\gamma_0 k+L+\gamma_{\max}}
		,\,
		\left(
		\frac{L}{L+\gamma_{\min}}
		\right)^k
		\right\},
	\end{equation}
	where 
	$C_0:= \mathcal E_0+\mathcal R_0(1+\nm{\lambda^*})$ with $\mathcal R_0:=
	\sqrt{2\beta_{0}\mathcal E_0}+\beta_{0}
	\nm{\lambda_0-\lambda^*}+\nm{Ax_0-b}$.
\end{thm}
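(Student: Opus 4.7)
The plan is to adapt the proof of \cref{thm:conv-im} to the semi-implicit scheme; the target contraction is $\mathcal E_{k+1}-\mathcal E_k\leqslant-\alpha_k\mathcal E_k$ (with $\mathcal E_k$ rather than $\mathcal E_{k+1}$ on the right), since the parameter system \cref{eq:betak-ex} is now explicit, giving $\beta_{k+1}=(1-\alpha_k)\beta_k$ and $\gamma_{k+1}-\gamma_k=\alpha_k(\mu-\gamma_k)$. I would first split $\mathcal E_{k+1}-\mathcal E_k=J_1+J_2+J_3$ exactly as in \cref{eq:Ii} (the $\mathcal L(x^*,\lambda_k)$-piece drops out, as $\mathcal L(x^*,\cdot)$ is constant); the claim $\{\alpha_k\}\subset(0,1]$ is already guaranteed by \cref{lem:rate-bk}.

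For $J_2$, I expand $\nm{\lambda_{k+1}-\lambda^*}^2-\nm{\lambda_k-\lambda^*}^2$ as $2\langle\lambda_{k+1}-\lambda_k,\lambda_{k+1}-\lambda^*\rangle-\nm{\lambda_{k+1}-\lambda_k}^2$, absorb the parameter change $\beta_{k+1}-\beta_k=-\alpha_k\beta_k$ (producing the desired $-\tfrac{\alpha_k\beta_k}{2}\nm{\lambda_k-\lambda^*}^2$), and substitute the dual update $\beta_{k+1}(\lambda_{k+1}-\lambda_k)=\alpha_k(Av_{k+1}-b)$; splitting $Av_{k+1}-b=(Ax_k-b)+\alpha_k^{-1}A(x_{k+1}-x_k)$ via \cref{eq:pdf-sys-semi-v} yields cross terms $\alpha_k\langle\lambda_{k+1}-\lambda^*,Ax_k-b\rangle$ and $\langle\lambda_{k+1}-\lambda^*,A(x_{k+1}-x_k)\rangle$. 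For $J_3$ I expand the quadratic using $\gamma_{k+1}-\gamma_k=\alpha_k(\mu-\gamma_k)$; the arising inner product $\gamma_{k+1}\langle x_{k+1}-x_k,x_k-x^*\rangle$ is precisely what \cref{lem:key-gd-map} (with $y=x^*$) controls, giving after rearrangement $J_3\leqslant-\alpha_k[\mathcal L(x_{k+1},\lambda_{k+1})-\mathcal L(x^*,\lambda_{k+1})]-\tfrac{\alpha_k\gamma_k}{2}\nm{x_k-x^*}^2+\tfrac{L\alpha_k-\gamma_{k+1}}{2}\nm{x_{k+1}-x_k}^2$. For $J_1$, I use $\mathcal L(x_{k+1},\lambda^*)=\mathcal L(x_{k+1},\lambda_{k+1})-\langle\lambda_{k+1}-\lambda^*,Ax_{k+1}-b\rangle$.

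Summing $J_1+J_2+J_3$, the three cross terms in $\lambda_{k+1}-\lambda^*$ telescope (using $Ax^*=b$) to $-(1-\alpha_k)\langle\lambda_{k+1}-\lambda^*,Ax_k-b\rangle$, which together with the collected Lagrangian values collapses to the single residual $(1-\alpha_k)[\mathcal L(x_{k+1},\lambda_{k+1})-\mathcal L(x_k,\lambda_{k+1})]$ sitting on top of the desired $-\alpha_k\mathcal E_k$. I then bound this residual by \cref{lem:key-gd-map} with $y=x_k$ (a descent inequality) and merge with the $\nm{x_{k+1}-x_k}^2$ piece already extracted from $J_3$: the total coefficient simplifies to $\tfrac{L\alpha_k-(2-\alpha_k)\gamma_{k+1}}{2\alpha_k}$, which is nonpositive exactly under the step-size condition $\alpha_k(L+\gamma_{k+1})\leqslant 2\gamma_{k+1}$. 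This establishes \cref{eq:conv-semi-comp}.

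For \cref{eq:conv-comp} under $\alpha_k(L+\gamma_{k+1})=\gamma_{k+1}$, iterating the contraction gives $\mathcal E_k\leqslant(\beta_k/\beta_0)\mathcal E_0$, and \cref{lem:rate-bk} supplies the two decay profiles. For $\nm{Ax_k-b}$, I verify the discrete conservation law $\lambda_k-\beta_k^{-1}(Ax_k-b)=\lambda_0-\beta_0^{-1}(Ax_0-b)$ by a short direct computation using \cref{eq:pdf-sys-semi-lambda-comp,eq:betak-ex} (the discrete analogue of \cref{eq:dr/dt}), then apply the triangle inequality exactly as in \cref{thm:conv-im} to obtain $\nm{Ax_k-b}\leqslant(\beta_k/\beta_0)\mathcal R_0$. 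The objective gap follows from $\snm{f(x_k)-f(x^*)}\leqslant[\mathcal L(x_k,\lambda^*)-\mathcal L(x^*,\lambda_k)]+\nm{\lambda^*}\nm{Ax_k-b}$, and summing the three contributions yields the stated constant $C_0$. The main obstacle I anticipate is the bookkeeping in the preceding paragraph: verifying that the three cross terms telescope cleanly and, more delicately, that the two separate $\nm{x_{k+1}-x_k}^2$ contributions (from the two applications of \cref{lem:key-gd-map}) combine to exactly the coefficient that becomes nonpositive under the prescribed step-size condition, which is precisely the source of the requirement $\alpha_k(L+\gamma_{k+1})\leqslant 2\gamma_{k+1}$.
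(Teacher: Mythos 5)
Your proposal is correct and follows essentially the same route as the paper: the same decomposition $\mathcal E_{k+1}-\mathcal E_k=I_1+I_2+I_3$, two applications of \cref{lem:key-gd-map} (with $y=x^*$ and $y=x_k$), cancellation of the cross terms against the dual update, the combined coefficient $\frac{\alpha_k(L+\gamma_{k+1})-2\gamma_{k+1}}{2\alpha_k}\leqslant 0$, and the rates via \cref{lem:rate-bk} together with the discrete conservation law for $\nm{Ax_k-b}$. The only differences are cosmetic bookkeeping (where exactly the shift from $\lambda_{k+1}$ to $\lambda^*$ is performed), and your computation of the conservation law for the semi-implicit dual update, which the paper leaves implicit, checks out.
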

\begin{proof}
	The existence of the step size sequence $\{\alpha_k\}\subset(0,1]$ has been proved in \cref{lem:rate-bk}.
	Once the contraction \eqref{eq:conv-semi-comp} is established, we obtain $\mathcal E_k\leqslant \mathcal E_0\times \prod_{i=0}^{k-1}(1-\alpha_i)$, and the estimate \cref{eq:conv-comp}
	can be 
	obtain by using \cref{lem:rate-bk} and the same procedure for \eqref{eq:conv-Axk-b-im}, \eqref{eq:conv-Lk-im} and \eqref{eq:conv-fk-im}. 
	
	Following the proof of \cref{thm:conv-im}, we start from the 
	difference $	\mathcal E_{k+1}-\mathcal E_k = I_1+I_2+I_3$, where $I_1,\,I_2$ and $I_3$ are defined in \cref{eq:Ii}. By \cref{eq:betak-ex}, we have
	\[
	\begin{split}
		I_2
		={}&		\frac{\beta_{k+1}-\beta_{k}}{2}\nm{\lambda_{k}-\lambda^*}^2
		+\frac{\beta_{k+1}}{2}
		\left(\nm{\lambda_{k+1}-\lambda^*}^2 - 
		\nm{\lambda_{k}-\lambda^*}^2\right)\\
		={}&		-\frac{\alpha_k\beta_{k}}{2}\nm{\lambda_{k}-\lambda^*}^2
		+\beta_{k+1}\dual{\lambda_{k+1}-\lambda_k,\lambda_{k+1}-\lambda^*}-\frac{\beta_{k+1}}{2}\nm{\lambda_{k+1}-\lambda_k}^2.
	\end{split}
	\]
	Plugging \eqref{eq:pdf-sys-semi-lambda-comp} into the second term and dropping the last negative term lead to
	\begin{equation}\label{eq:im-I2-est-comp}
		I_2\leqslant-\frac{\alpha_k\beta_{k}}{2}\nm{\lambda_{k}-\lambda^*}^2+ \alpha_{k}\dual{Av_{k+1}-b,\lambda_{k+1}-\lambda^*}.
	\end{equation}
	Similarly, for $I_3$, it holds that
	\[
	\begin{split}
		I_3
		={}&		\frac{\gamma_{k+1}-\gamma_k}{2}
		\nm{x_{k}-x^*}^2+\frac{\gamma_{k+1}}{2}
		\left(\nm{x_{k+1}-x^*}^2-\nm{x_{k}-x^*}^2 
		\right)\\
		=	{}&	\frac{\alpha_k(\mu-\gamma_{k})}{2}
		\nm{x_{k}-x^*}^2+	\gamma_{k+1}
		\dual{x_{k+1}-x_k, x_{k} -x^*}
		+	\frac{\gamma_{k+1}}2
		\nm{x_{k+1}-x_k}^2,
	\end{split}
	\]
	and invoking Lemma \ref{lem:rate-bk} gives
	\[
	\begin{split}
		I_3 \leqslant  {}&\alpha_k(\mathcal L(x^*,	\lambda_{k+1} )-\mathcal L(x_{k+1},	\lambda_{k+1} ))
		-\frac{\alpha_k \gamma_{k}}{2}\nm{x_{k}-x^*}^2
		%\\
		%{}& \quad
		+\frac{L\alpha_k-\gamma_{k+1}}{2}\nm{x_{k+1}-x_k}^2.
	\end{split}
	\]
	
	To match the right hand side of \eqref{eq:conv-semi-comp}, we shift $(x_{k+1},\lambda_{k+1})$ 
	to $(x_{k},\lambda_{k+1})$ and then to $(x_{k},\lambda^*)$ and obtain that
	\[
	\begin{aligned}
		I_3\leqslant {}&\alpha_k(\mathcal L(x^*,\lambda_k)
		-\mathcal L(x_{k},\lambda^*))
		-\frac{\alpha_k\gamma_{k}}{2}\nm{x_k-x^*}^2\\
		{}&\quad-\alpha_k\dual{Ax_{k}-b,	\lambda_{k+1} -\lambda^*}
		+\frac{L\alpha_k-\gamma_{k+1}}{2}\nm{x_{k+1}-x_k}^2\\
		{}&\quad \qquad+\alpha_k(\mathcal L(x_{k},	\lambda_{k+1} )
		-\mathcal L(x_{k+1},	\lambda_{k+1} )).
	\end{aligned}
	\]
	To offset the last term in the above estimate, we shall divide $I_1$ as follows
	\[
	\begin{split}
		{}&I_1 = 	\mathcal L(x_{k+1},\lambda^*)-\mathcal L(x_{k},\lambda^*)\\
		={}&\alpha_k(\mathcal L(x_{k+1},\lambda_{k+1})-\mathcal L(x_{k},\lambda_{k+1}))
		-	 \dual{Ax_{k+1}-Ax_k,\lambda_{k+1}-\lambda^*}\\
		{}&\quad+(1-\alpha_k)(\mathcal L(x_{k+1},\lambda_{k+1})-\mathcal L(x_{k},\lambda_{k+1})).
	\end{split}
	\]
	Applying Lemma \ref{lem:rate-bk} again implies
	\[
	\begin{split}
		I_1 \leqslant{}&\alpha_k(\mathcal L(x_{k+1},\lambda_{k+1})-\mathcal L(x_{k},\lambda_{k+1}))
		-	 \dual{Ax_{k+1}-Ax_k,\lambda_{k+1}-\lambda^*}\\
		{}&\qquad+\frac{1-\alpha_k}{2\alpha_k}(L\alpha_k-2\gamma_{k+1} )\nm{x_{k+1}-x_k}^2,
	\end{split}
	\]
	which together with the relation $v_{k+1} = x_k+(x_{k+1}-x_k)/\alpha_k$ yields that
	\begin{equation}\label{eq:I1+I3-semi-comp}
		\begin{split}
			I_1 +I_3\leqslant{}&\alpha_k(\mathcal L(x^*,\lambda_k)
			-\mathcal L(x_{k},\lambda^*))
			-\frac{\alpha_k\gamma_{k}}{2}\nm{x_k-x^*}^2
			- \alpha_{k}\dual{Av_{k+1}-b,\lambda_{k+1}-\lambda^*}\\
			{}&\qquad
			+\frac{\alpha_k(L+\gamma_{k+1})-2\gamma_{k+1}}{2\alpha_k}
			\nm{x_{k+1}-x_k}^2.
		\end{split}
	\end{equation}
	Consequently, combining this with the estimate \cref{eq:im-I2-est-comp} for $I_2$ implies
	\[
	\begin{split}
		\mathcal E_{k+1}-\mathcal E_k
		\leqslant&-\alpha_k\mathcal E_k
		+\frac{\alpha_k(L+\gamma_{k+1})-2\gamma_{k+1}}{2\alpha_k}
		\nm{x_{k+1}-x_k}^2.
	\end{split}
	\]
	As $\alpha_k(L+\gamma_{k+1}) \leqslant 2\gamma_{k+1}$, this establishes \eqref{eq:conv-semi-comp} and completes the proof.
\end{proof}
\section{Numerical Experiments}
\label{sec:numer}
In this part, we investigate practical performances of Algorithms \ref{algo:Im-PD} and \ref{algo:Semi-PDPG} for the $l_1$-$l_2$ minimization \cref{eq:l1-l2} and the total-variation based image denoising model \cref{eq:rof-dis-x-y-z}. 
\subsection{The $l_1$-$l_2$ minimization}
\label{sec:l1l2}
We first consider the linearly constrained $l_1$-$l_2$ minimization:
\begin{equation}\label{eq:l1-l2}
	\min_{x\in\R^n}~\frac{\rho}{2}\nm{x}^2+\nm{x}_1\quad {\rm s.t.~} Ax = b,
\end{equation}
where $\rho>0,\,b\in\R^m$ and $A\in\R^{m\times n}$ with $m\ll n$. This is a regularized model for the so-called basis pursuit \cite{chen1999}, which corresponds to the limit case $\rho=0$ and is related to compressed sensing \cite{Candes2008}.

Let $g(x) = \nm{x}_{1}$, then for any $\eta>0$, the proximal mapping $\prox_{\eta g}(x)={\rm sgn}(x)\odot\max\{|x|-\eta,0\}$
is well known as the {\it soft thresholding operator}, with the $i$-th component of  $y=\prox_{\eta g}(x)$ being given by $y_i = {\rm sgn}(x_i)\max\{|x_i|-\eta,0\}$. Here and in what follows, $\odot$ and $\oslash$ stand respectively for element-wise multiplication and division operations. The conjugate function $g^*$ of $g$ is the indicator function of the cube $[-1,1]^n$ and thus $\prox_{\eta g^*}(x) = \min\left\{
\max\{x,-1\},1
\right\}$.
\subsubsection{Comparison with ALB}
There are some well-known Bregman methods for solving \cref{eq:l1-l2}; see \cite{yin_analysis_2010,Huang2013,Kang2013,cai_linearized_2009}. Both of the two accelerated variants in \cite{Huang2013,Kang2013} possess the nonergodic sublinear  rate $\mathcal O(1/k^2)$ for the dual objective but the method in \cite{Kang2013} involves a subproblem for the primal variable. In contrast, the accelerated linearized Bregman (ALB) method in \cite{Huang2013} linearizes the augmented term and admits closed update formulation in each step. More precisely, it reads as follows: given $\lambda_0,\,\widetilde\lambda_0\in\R^{m}$, do the iteration 
\begin{equation}\label{eq:ALB}
	\left\{
	\begin{split}
		x_{k+1} = {}&\prox_{ g/\rho}\left(-A^{\top}\widetilde{\lambda}_k/\rho\right),\\
		\lambda_{k+1} = {}&\widetilde \lambda_k + \tau\left(Ax_{k+1}-b\right),\\
		\widetilde\lambda_{k+1} = {}&t_k\lambda_{k+1}+(1-t_k)\lambda_k,
	\end{split}
	\right.
\end{equation}
where $t_k = (2k+3)/(k+3)$ and $\tau= \rho/\nm{A}^2$. 
\renewcommand\arraystretch{1.2}
\begin{table}[H]
	\centering
	\caption{Performances of Inexact Semi-PDPG (i.e.,Algorithm \ref{algo:Semi-PDPG-l1l2}) and 
		ALB method \cref{eq:ALB} for solving \eqref{eq:l1-l2}. Here, ``direct" and ``PCG" mean that the linear system in step \ref{algo:d-l1l2} of Algorithm \ref{algo:Semi-PDPG-l1l2} is solved respectively by direct method and PCG.}
	\label{tab:l1-l2}
	\small\setlength{\tabcolsep}{0.8pt}
	\begin{tabular}{cccccccccccccccc}
		\toprule
		&\phantom{a}&&\phantom{a}&\phantom{a} & \phantom{a} &
		\multicolumn{3}{c}{Inexact Semi-PDPG(direct)}&\phantom{a} & 
		\multicolumn{3}{c}{Inexact Semi-PDPG(PCG)}&\phantom{a} & 
		\multicolumn{2}{c}{ALB}\\
		\cmidrule{7-9}			\cmidrule{11-13} 	\cmidrule{15-16} 
		&&$m$& &$n$ &&its& SsN&time(sec)&&its& SsN&time(sec)&    & its   & time(sec)\\
		\midrule
		\multirow{3}{*}{ $\rho = 0.5$}	
		&& 5e+02  && 2e+03  && 21 &42& 5.20&& 21 &40& 3.59&& 537   & 4.20 \\ 
		&& 8e+02  && 3e+03  && 21 &46& 10.76&& 21 &43& 11.40&& 593   & 10.86 \\ 
		&& 1e+03  && 4e+03  && 21 &39& 12.33&& 21 &42& 12.37&& 546   & 16.15 \\ 
		\midrule
		\multirow{3}{*}{$\rho =0.1$}						
		&& 2e+02  && 1e+03  && 20 &34& 0.70&& 20 &43& 1.08&& 2330   & 2.68 \\ 
		&& 5e+02  && 3e+03  && 21 &37& 3.66&& 19 &51& 5.66&& 1967   & 23.81 \\ 
		&& 1e+03  && 5e+03  && 20 &43& 15.61&& 20 &47& 14.00&& 2118   & 81.83 \\ 
		\midrule
		\multirow{3}{*}{ $\rho = 0.01$}						
		&& 5e+02  && 2e+03  && 19 &56& 4.50&& 18 &60& 5.92&& 13174   & 103.54 \\ 
		&& 9e+02  && 4e+03  && 18 &56& 12.49&& 22 &87& 41.76&& 12712   & 379.83 \\ 
		&& 2e+03  && 8e+03  && 17 &63& 87.29&& 19 &82& 246.34&& 13819   & 1693.99 \\ 
		\midrule
		\multirow{3}{*}{ $\rho =0.005$}						
		&& 8e+02  && 3e+03  && 21 &86& 23.39&& 19 &75& 39.25&& 19793   & 375.07 \\ 
		&& 2e+03  && 6e+03  && 20 &86& 153.48&& 23 &126& 579.69&& 20811   & 1778.27 \\ 
		&& 3e+03  && 9e+03  && 19 &83& 509.93&& 24 &139& 1933.28&& 21568   & 6592.60 \\ 
		\bottomrule
	\end{tabular}
\end{table}
We apply Algorithm \ref{algo:Semi-PDPG} to the problem \cref{eq:l1-l2}. In this case, as $g$ is piecewise affine, $\prox_{\eta g}$ is strongly semismooth \cite{Facchinei2006} and so is the nonlinear mapping $F_k(\cdot)$ defined by \cref{eq:Fk}.
For $\eta>0$ and $x\in\R^n$, define a diagonal matrix 
\begin{equation}\label{eq:P-eta}
	P_\eta(x)= {\rm diag}(p)\in\R^{n\times n}\quad\text{with } p_i = \left\{
	\begin{aligned}
		{}&1&&\text{if }\snm{x_i}\geqslant \eta,\\
		{}&0&&\text{if }\snm{x_i}<\eta.
	\end{aligned}
	\right.
\end{equation}
Then it is easy to see 
that $P_\eta(x)\in\partial \prox_{\eta g}(x)$, 
and we obtain a generalized Clarke subgradient for \cref{eq:lk1-semi-comp-prox}:
\begin{equation}\label{eq:JFk}
	JF_k(\lambda) = \beta_{k+1}I+\eta_k AP_{\eta_k}[v_k(\lambda)]A^{\top}\in\R^{m\times m},
\end{equation}
where $v_k(\lambda) = y_k-\eta_kA^{\top}\lambda$. Note that $P_{\eta_k}[v_k(\lambda)] = {\rm diag}(p)$ where $p$ is defined by \cref{eq:P-eta} with $p_i\in\{0,1\}$, and thus $JF_k(\lambda)$ is always SPD. Moreover, the function \cref{eq:cal-Fk} becomes
\[
\mathcal F_k(\lambda) = {}
\frac{\beta_{k+1}}{2}\nm{\lambda}^2-\dual{z_k,\lambda}+\frac{1}{2\eta_k}\nm{\prox_{\eta_k g}[v_k(\lambda)]}^2.
\]

\begin{algorithm}[H]
	\caption{Inexact Semi-PDPG method for the $l_1$-$l_2$ minimization problem \cref{eq:l1-l2}}
	\label{algo:Semi-PDPG-l1l2}
	\begin{algorithmic}[1] 
		\REQUIRE  $\gamma_0>0,\,\beta_0>0,\,\,x_0
		\in\R^{n}$ and $\lambda_0\in\R^{m}$.
		\STATE Problem setting: $\rho>0,\,b\in\R^m$ and $A\in\R^{m\times n}$.
		\STATE SsN setting: $\nu = 0.2,\,\delta = 0.9$ and $j_{\max}= 10$.
		\STATE Tolerances: $\mathtt{KKT\_Tol} = 10^{-6}$ and $\mathtt{SsN\_Tol} =10^{-8}$.
		\FOR{$k=0,1,\ldots$}
		\STATE Set $\sigma_k=2\gamma_k$ and $\Delta_k=\sigma_k+\sqrt{\sigma_k^2+4\gamma_k(\rho-\gamma_k)}$.
		\STATE Compute the step size $\alpha_k = 2\gamma_{k}/\Delta_k\in(0,1)$.
		\STATE Update $\displaystyle \beta_{k+1}= \beta_k(1-\alpha_k)$ and $\displaystyle \gamma_{k+1} =\rho\alpha_k+(1-\alpha_k)\gamma_k$.
		\STATE Set $\displaystyle 		\eta_{k}={}\alpha_k/\gamma_{k+1}$ and $y_k=x_k-\eta_k\rho x_k$.
		\STATE  Set $\displaystyle z_k  ={} \beta_{k+1}\left(\lambda_k-\beta_k^{-1}(Ax_k-b)\right)-b$.
		\STATE Solve $\lambda_{k+1}$ from the nonlinear equation
		\begin{equation}\label{eq:iner-l1l2}
			F_k(\lambda):=	\beta_{k+1}\lambda - A \prox_{\eta_k g}\left(y_k-\eta_k A^{\top}\lambda \right) -z_k=0
		\end{equation}
		via the following SsN iteration with $\lambda = \lambda_k$ and $j = 0$:
		\WHILE[SsN iteration]{$\nm{F_k(\lambda)}>\mathtt{SsN\_Tol}$ and $j<j_{\max}$}\label{algo:ssn-first}
		\STATE Compute $v_k = y_k - \eta_k A^\top\lambda$.
		\STATE Find $P_{\eta_k}(v_k)\in\partial \prox_{\eta_kg}(v_k)$ via \cref{eq:P-eta}.
		\STATE Compute $JF_k(\lambda)= \beta_{k+1}I+\eta_k AP_{\eta_k}(v_k)A^{\top}$.
		\STATE Solve $JF_k(\lambda) d =- F_k(\lambda)$.\label{algo:d-l1l2}
		\STATE Find the smallest integer $r\in\mathbb N_+$ such that $		\mathcal F_k(\lambda+\delta^rd)\leqslant \mathcal F_k(\lambda)+\nu\delta^r\dual{F_k(\lambda), d}$.
		\STATE Update $\lambda = \lambda+\delta^rd$ and $j = j+ 1$.
		\ENDWHILE		\label{algo:ssn-end}
		\STATE Update $\lambda_{k+1} = \lambda$ and $x_{k+1}	 = \prox_{\eta_k g}\left(y_k-\eta_k A^{\top}\lambda_{k+1}\right)$.
		\IF{${\rm Res}(k)\leqslant \mathtt{KKT\_Tol}$}
		\STATE {\bf break}
		\ENDIF
		\ENDFOR
	\end{algorithmic}
\end{algorithm}

We rewrite Algorithm \ref{algo:Semi-PDPG} in Algorithm \ref{algo:Semi-PDPG-l1l2}, where a practical inexact setting is considered. 
The SsN iteration (see lines \ref{algo:ssn-first}--\ref{algo:ssn-end} in Algorithm \ref{algo:Semi-PDPG-l1l2}) is stopped either $\nm{F_k(\lambda)}\leqslant \mathtt{SsN\_Tol} = 10^{-8}$ or $j_{\max}=10$. For the line search procedure, we adopt $\nu = 0.2$ and $\delta = 0.9$. All initial guesses $\beta_0,\,x_0$ and $\lambda_0$ are generated randomly, and we chose $\gamma_{0} = \mu + \sigma$ with $\sigma$ obeying the uniform distribution on $[0,1]$. By \cref{thm:conv-semi-comp} we the linear rate $2^{-k}$ (with exact computation). 

Recall the optimality condition of problem \cref{eq:l1-l2}: $Ax^* = b$ and $x^* = \prox_{ g}((1-\rho)x^*-A^\top\lambda^*)$.
Hence, we consider the stopping criterion:
\begin{equation}\label{eq:res-kkt}
	{\rm Res}(k) := \max\left\{{\rm Res}(x_k),{\rm Res}(\lambda_k)\right\}\leqslant \mathtt{KKT\_Tol} = 10^{-6},
\end{equation}
where the relative KKT residuals are defined by 
\[
{\rm Res}(\lambda_k):= \frac{\nm{Ax_k-b}}{1+\nm{b}}\quad\text{and}\quad
{\rm Res}(x_k):=	\frac{\nm{x_k-\prox_{g}\left((1-\rho)x_k-A^{\top}\lambda_k\right)}}{1+\nm{x_k}}.
\]

In step \ref{algo:d-l1l2} of Algorithm \ref{algo:Semi-PDPG-l1l2}, we have to solve a linear system and we consider two ways: one is direct method as $m\ll n$ and the other is preconditioned conjugate gradient (PCG) method (cf.\cite[Algorithm 9.1]{saad_iterative_2003}) with diagonal preconditioner. The PCG iteration is stopped either the relative residual is smaller than $10^{-8}$ or  the maximal iteration number $5000$ is attained. 
\begin{figure}[H]
	\centering
	\includegraphics[width=450pt,height=350pt]{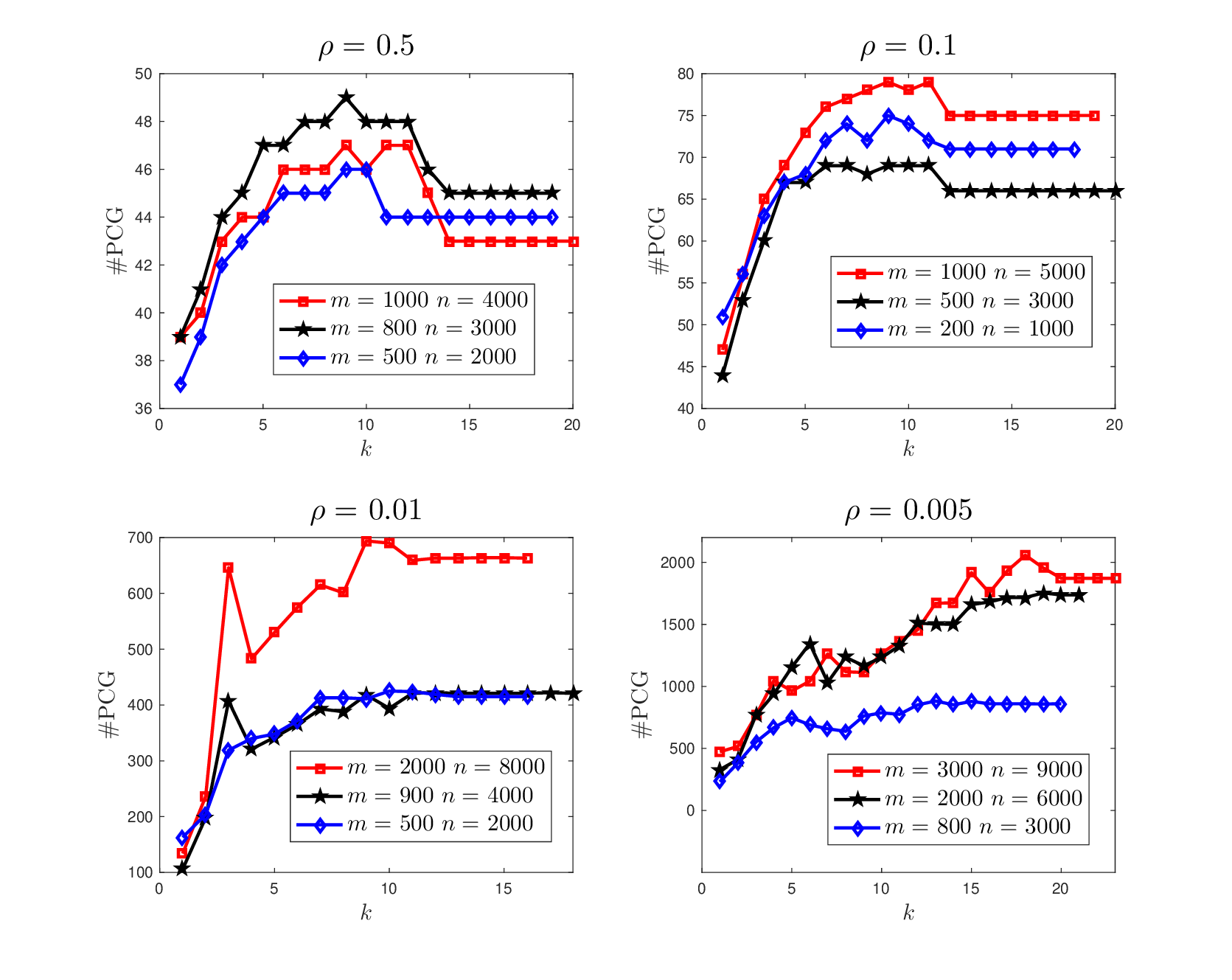}
	\caption{Averaged PCG iterations of Algorithm \ref{algo:Semi-PDPG-l1l2} for solving \eqref{eq:l1-l2} with different problem size and $\rho$.}
	\label{fig:Semi-PDPG-l1-l2-pcg}
\end{figure}

Computational results are reported in \cref{tab:l1-l2}, which includes (i) {\bf its}: the number 
of total iterations, (ii) {\bf SsN}: the number of the SsN iterations for the inner problem \cref{eq:iner-l1l2}, and (iii) {\bf time}: the running time (in seconds). 
To achieve the tolerance \cref{eq:res-kkt}, the number of iterations of Algorithm \ref{algo:Semi-PDPG-l1l2} is almost $k^* = 6\ln 10/\ln2\approx20$. 
This can be observed from \cref{tab:l1-l2}. However, as $\rho$ becomes small, the problem \cref{eq:l1-l2} itself is more degenerate and the 
number of iterations of the ALB method grows dramatically. 

\subsubsection{Performance of the PCG iteration}
\label{sec:PCG-l1l2}
From \cref{tab:l1-l2} we see that Algorithm \ref{algo:Semi-PDPG-l1l2} with PCG solver is slightly inferior than that with direct solver, both for total iteration number and running time. We now investigate the performance of the PCG iteration.

The linear system arising from step \ref{algo:d-l1l2} of Algorithm \ref{algo:Semi-PDPG-l1l2} is $JF_k(\lambda)d = -F_k(\lambda)$, where $JF_k(\cdot)= \beta_{k+1}I+\eta_k\mathcal A_0(\cdot)$ is defined by \cref{eq:JFk} and $\mathcal A_0(\cdot)$ is symmetric semi-positive definite. Note that $JF_k(\cdot)$ is always SPD but also nearly singular as $\beta_{k+1}\to 0$. Hence, the iteration number will increase as $k$ does. Fortunately, for large $k$, we may expect that $F_k(\cdot)$ is close to zero (as the algorithm converges) and the nearly singular property is not a serious problem.

Recall that we used the diagonal preconditioner, i.e., Jacobi iteration, and the terminal criterion is relative residual $\leqslant 10^{-8}$, with the maximal iteration number $5000$. In every $k$-th step of Algorithm \ref{algo:Semi-PDPG-l1l2}, we record the PCG iteration $\#_{k,j}$ of the $j$-th SsN iteration and obtain an averaged number $\#_k =\frac{1}{s_k} \sum_{j=1}^{s_k}\#_{k,j}$, where $s_k$ denotes the number of SsN iterations for solving the subproblem \cref{eq:iner-l1l2}.

In Figure \ref{fig:Semi-PDPG-l1-l2-pcg}, we plot the averaged PCG iterations of Algorithm \ref{algo:Semi-PDPG-l1l2} with the same problem size and $\rho$ used in \cref{tab:l1-l2}. As predicted above, due to the nearly singular property, the PCG iteration number grows up as $k$ increases but stays flat for large $k$. Moreover, it is not robust with respect to the problem size and $\rho$. 
\subsubsection{Restarting and warm-up}
Note that in the few starting steps, i.e., for small $k$, the SsN iteration may not achieve the desired tolerance $\nm{F_k(\lambda^j )}\leqslant \mathtt{SsN\_Tol}$ within $j_{\max} = 10$ iterations and the KKT residual ${\rm Res}(k)$ (cf.\cref{eq:res-kkt}) might not decay linearly while $\beta_k$ has already attained a small number, which makes the subproblem \cref{eq:iner-l1l2} degenerate. Hence, to ensure the stability, we adopt the restart technique. 
\begin{figure}[H]
	\centering
	\includegraphics[width=400pt,height=350pt]{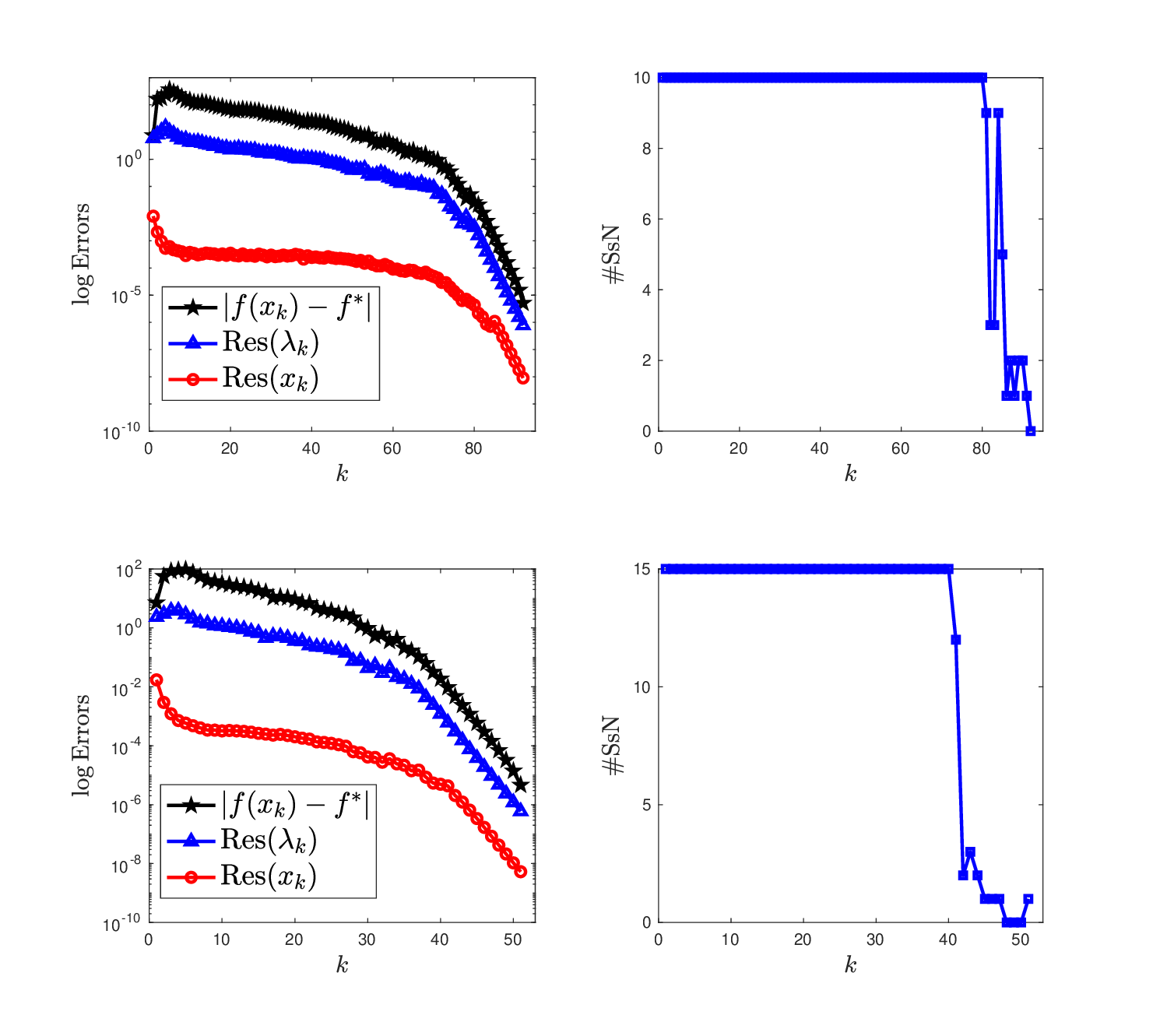}
	\caption{Performance of Algorithm \ref{algo:Semi-PDPG-l1l2} for solving \eqref{eq:l1-l2} with $m = 2000,\,n = 5000$ and $\rho = 0.0005$. The maximal iteration numbers of the SsN iteration for the top row and the bottom row are $j_{\max}=10$ and $j_{\max}=15$, respectively. The left part plots the decay behavior of the errors and the right part shows the number of SsN iteration in each step.}
	\label{fig:Semi-PDPG-l1-l2}
\end{figure}

We consider a more singular case $\rho = 0.0005$ and restart the algorithm whenever $\beta_k\leqslant 10^{-7}$ and the KKT residual ${\rm Res}(k)$ increases. From Figure \ref{fig:Semi-PDPG-l1-l2}, we observe that for this extreme case, (i) the total iteration number increases; (ii) in more than half of the total number of iterations, the errors decay slowly and the SsN iteration number attains its maximal value $j_{\max}$ (we set $j_{\max}=10$ for the top row and $j_{\max}=15$ for the bottom row), but after that, fast local linear convergence arises and the number of SsN iterations decreases.

As suggested by the results in Figure \ref{fig:Semi-PDPG-l1-l2}, a warm-up procedure might improve the performance of the algorithm and we show this in Figure \ref{fig:Semi-PDPG-l1-l2-warmup}, where the initial guess is obtained by running the ALB method 500 times. This works well indeed and the convergence behavior is much better than that in Figure \ref{fig:Semi-PDPG-l1-l2}.
\begin{figure}[H]
	\centering
	\includegraphics[width=400pt,height=160pt]{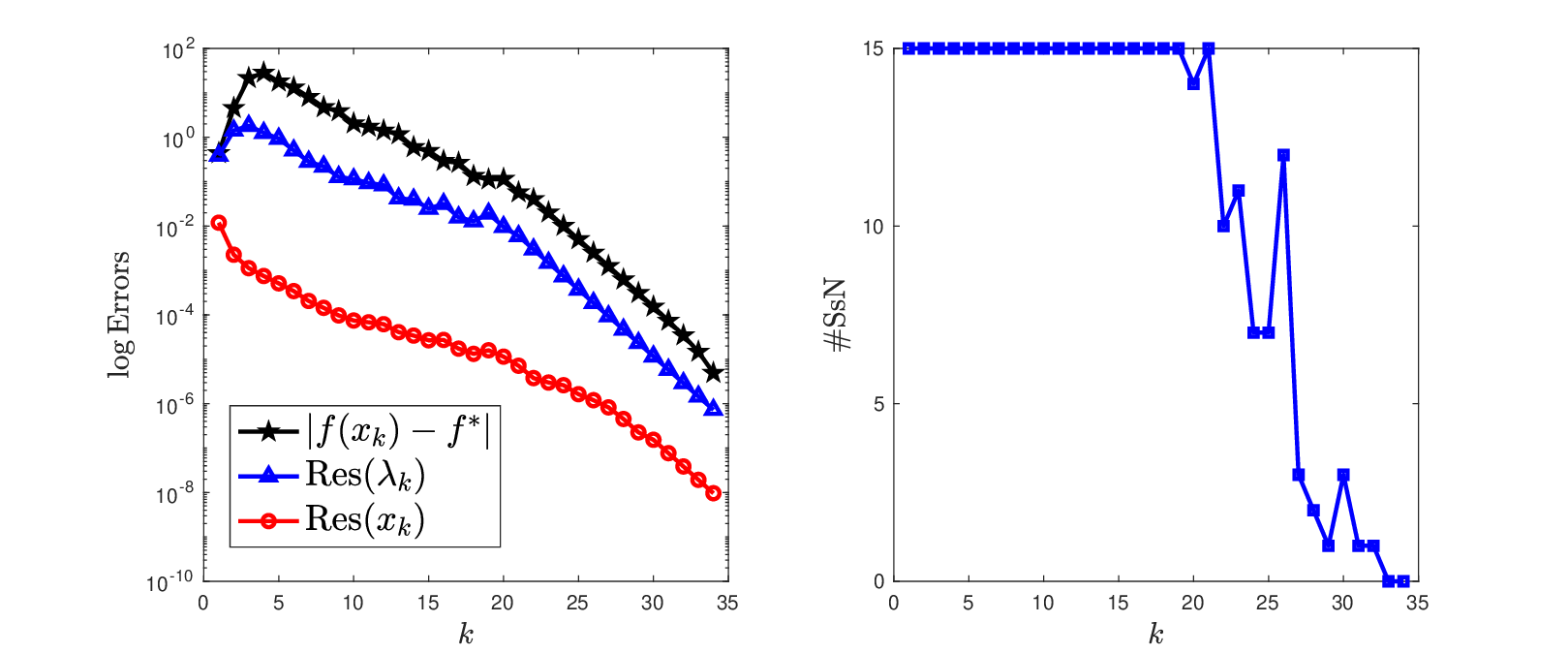}
	\caption{Performance of Algorithm \ref{algo:Semi-PDPG-l1l2} for solving \eqref{eq:l1-l2} with warm-up procedure. Here, we take $m = 2000,\,n = 5000$ and $\rho = 0.0005$, and the maximal number of the SsN iteration is $j_{\max}=15$. The initial guess is obtained via running the ALB method 500 times.}
	\label{fig:Semi-PDPG-l1-l2-warmup}
\end{figure}
\subsection{Total-variation based image denoising}
\label{sec:rof}
Given a noised image $g\in L^2(\Omega)$ with the domain $\Omega\subset\R^2$, the total variation based denoising model proposed by Rudin, Osher and Fatemi (ROF for short) \cite{rof_1992} reads as follows
\begin{equation}\label{eq:rof}
	\min_{u}\int_{\Omega}\snm{\nabla u}\dd x+ \frac{\rho}{2}\nm{u-g}^2_{L^2(\Omega)},
\end{equation}
where $\rho>0$ is the regularization parameter and $\snm{\nabla u}: = \sqrt{|\nabla_xu|^2+|\nabla_yu|^2}$.
\subsubsection{Discrete formulations}
In discrete setting, problem \cref{eq:rof} becomes
\begin{equation}\label{eq:rof-dis}
	\min_{U\in\R^{m\times n}}
	\sum_{i=1}^{m}\sum_{j=1}^{n}\sqrt{|\left(\mathcal D(U)\right)_{i,j,1} |^2+ |\left(\mathcal D(U)\right)_{i,j,2}|^2}
	+\frac{\rho}{2} \nm{U-\Xi}^2_F,
\end{equation}
where $\Xi\in\R^{m\times n}$ and $\mathcal D:\R^{m\times n}\to \R^{m\times n\times 2}$ denotes the discrete gradient operator, i.e., 
\[
\left(\mathcal D(U)\right)_{i,j,1} := 
\left\{
\begin{aligned}
	{}&U_{i+1,j}-U_{i,j}&&\text{if}\,i<m,\\
	{}&0&&\text{if}\,i=m,
\end{aligned}
\right.\quad\text{and}\quad
\left(\mathcal D(U)\right)_{i,j,2}:= 
\left\{
\begin{aligned}
	{}&U_{i,j+1}-U_{i,j}&&\text{if}\,j<n,\\
	{}&0&&\text{if}\,j=n,\\
\end{aligned}
\right.
\]
for all $1\leqslant i\leqslant m$ and $1\leqslant j\leqslant n$. Let ${\rm vec}(\times)$ be the vector expanded by the matrix $\times $ by its column. 
Then rewrite \cref{eq:rof-dis} as a composite problem
\begin{equation}\label{eq:min-u}
	\min_{u\in\R^{mn}}\,\psi(Au)+\frac{\rho}{2}\nm{u-\xi}^2 ,
\end{equation}
where $\xi = {\rm vec}(\Xi)$ and $A : = \begin{pmatrix}
	I_n\otimes D_m\\
	D_n\otimes I_m\\
\end{pmatrix}$, with the difference matrices $D_m$ and $D_n$ being defined such that $	(I_n\otimes D_m){\rm vec}(U) ={\rm vec}(\mathcal D(U)_{i,j,1}) $ and $	(	D_n\otimes I_m){\rm vec}(U) ={\rm vec}(\mathcal D(U)_{i,j,2}) $.

Let $\mathcal A = (-A,I)$ and introduce a function $\psi:\R^{2mn}\to\R$ by that
\[
\psi(\bm p): = \sum_{i=1}^{mn}\sqrt{ p_{i}^2+q_{i}^2}\quad\forall\,
\bm p=\begin{pmatrix}
	p\\q
\end{pmatrix}\in \R^{2mn}.
\]
Then \cref{eq:min-u} can be written as the standard form \cref{eq:min-f-Ax-b}:
\begin{equation}\label{eq:rof-dis-x-y-z}
	\min_{X=(u,\bm p)}f(X): = \frac{\rho}{2}\nm{u-\xi}^2+\psi(\bm p)  \quad{\rm s.t.}\,\mathcal AX= 0.
\end{equation}
\subsubsection{Accelerated primal-dual methods}
There are some well-known accelerated primal-dual methods for solving the discrete ROF model \cref{eq:rof-dis}. Here, we choose two baseline algorithms: the primal-dual hybrid gradient (PDHG) method \cite[Algorithm 2]{chambolle_first-order_2011} and the accelerated alternating direction method of multipliers (A-ADMM) \cite[Algorithm 2]{Xu2017}. Ergodic convergence rate $\mathcal O(1/k^2)$ is achieved by those two methods. For completeness, we list them as below.
\begin{itemize}
	\item {\bf PDHG method \cite[Algorithm 2]{chambolle_first-order_2011}}~~This method starts from the minimax formulation of \cref{eq:min-u}:
	\begin{equation}\label{eq:rof-dis-minmax-u}
		\min_{u\in\R^{mn}}\max_{\bm \lambda\in\R^{2mn}}\dual{Au,\bm \lambda}+\frac{\rho}{2} \nm{u-\xi}^2-\psi^*(\bm \lambda),
	\end{equation}
	where 
	\[
	\bm{\lambda} = \begin{pmatrix}
		v\\ w
	\end{pmatrix}\in\R^{2mn}\quad\text{and}\quad
	\psi^*(\bm \lambda): = \left\{
	\begin{aligned}
		{}&0&&\text{if }\sqrt{v_{i}^2+w_{i}^2}
		\leqslant 1\text{ for all }1\leqslant i\leqslant mn,\\
		{}&+\infty&&\text{ else}.
	\end{aligned}
	\right.
	\]
	More precisely, it reads as follows: given $\sigma_0=0,\,\bm \lambda_0\in\R^{2mn}$ and $u_{-1} = u_0\in\R^{mn}$, do the iteration
	\begin{equation}\label{eq:PDHG}
		\left\{
		\begin{aligned}
			{}&\bar u_{k} = u_{k} + \sigma_{k}(u_{k}-u_{k-1}),\\		
			{}&\bm \lambda_{k+1} = \prox_{\theta_k\psi^*}(\bm \lambda_k+\theta_kA\bar u_k),\\
			{}&u_{k+1} =\frac{u_k - \tau_k A^\top\bm \lambda_{k+1}}{1+\rho\tau_k} + \frac{\rho\tau_k\xi}{1+\rho\tau_k} ,\\		
			{}&\sigma_{k+1} = {}1/\sqrt{1+2\rho\tau_k},\,\tau_{k+1} = \sigma_{k+1}\tau_k,\,\theta_{k+1} = \theta_{k}/\sigma_{k+1},
		\end{aligned}
		\right.
	\end{equation}
	where $\tau_0\theta_0\nm{A}^2\leqslant 1$ with $\nm{A}^2\leqslant 8$ (cf. \cite{chambolle_algorithm_2004}). Thanks to Moreau's identity \cref{eq:moreau-id}, for all $\theta>0$, we have $\prox_{\theta \psi^*}(\bm \lambda) =(v\odot\sigma(\bm \lambda),w\odot\sigma(\bm \lambda))$, where $\sigma(\bm \lambda):=1-\tau(v,w)$ with $\tau(v,w)$ being defined by \cref{eq:tau}.
	\vskip0.2cm
	\item {\bf A-ADMM \cite[Algorithm 2]{Xu2017}}~~Applying this method to problem \cref{eq:rof-dis-x-y-z} leads to the iteration:  given $\theta\geqslant \nm{A}^2,\lambda_0=0,\,\bm p_0\in\R^{2mn}$ and $u_0\in\R^{mn}$, compute
	\begin{equation}\label{eq:A-ADMM}
		\left\{
		\begin{aligned}
			{}&		\theta_k = \frac{2\theta}{\rho(k+1)},\\
			{}&	\bm p_{k+1} = \prox_{\theta_k\psi}(Au_k-\theta_k\lambda_k),\\
			{}&u_{k+1} = \left(\rho\theta_kI+A^\top A\right)^{-1}\left(A^\top(\bm p_{k+1}+\theta_k\lambda_k)+\rho\theta_k\xi\right),\\		
			{}&	\lambda_{k+1} = \lambda_k + \frac{1}{\theta_k}\left(\bm p_{k+1}-Au_{k+1}\right),
		\end{aligned}
		\right.
	\end{equation}
	where $\prox_{\theta_k\psi}(\cdot)$ is defined by \cref{eq:prox-psi} and the inverse operation $\left(\rho\theta_kI+A^\top A\right)^{-1}$ can be realized via fast Fourier transform.
\end{itemize}
\subsubsection{Inexact implicit primal-dual method}
We apply our Algorithm \ref{algo:Im-PD} to problem \cref{eq:rof-dis-x-y-z} and obtain an inexact Im-PD method; see Algorithm \ref{algo:Im-PD-ROF}. For clarity, we provide some details about the proximal calculations. Given $a,b\in\R$ and $\theta>0$, define $\tau_\theta(a,b)\in\R$ and $\mathcal T_\theta(a,b)\in\R^{2\times 2}$ respectively by that
\[
\begin{aligned}
	\tau_\theta(a,b): ={}&1-\frac{\theta}{\max\left\{\theta,\sqrt{a^2+b^2}\right\}},\\
	\notag
	\mathcal T_\theta(a,b): = {}&
	\left\{
	\begin{aligned}
		{}&
		\tau_\theta(a,b)I + 
		\frac{1-\tau_\theta(a,b)}{a^2+b^2}\begin{pmatrix}
			a^2&ab\\ ab&b^2
		\end{pmatrix}
		&&\text{ if }\sqrt{a^2+b^2}\geqslant \theta,\\
		{}&O_{2\times 2}&&\text{else}.
	\end{aligned}
	\right.
\end{aligned}
\]
If $a,\,b\in\R^n$, then $\tau_\theta(a,b)\in\R^n$ can be understood as point wise operation:
\begin{equation}	\label{eq:tau}
	\tau_\theta(a,b): ={}{\bf 1}_n-\theta{\bf 1}_n\oslash\max\{\theta{\bf 1}_n,\sqrt{a\odot a + b\odot b}\}.
\end{equation}
For $\theta = 1$, we simply write $		\tau_\theta(a,b) = 		\tau(a,b)$.

For $X = (u,\bm p)\in\R^{3mn}$ and $\theta>0$, the proximal mapping of $f$ is given by $\prox_{\theta f}(X) = (	\frac{u+\rho\theta\xi}{1+\rho\theta},	\prox_{\theta\psi}(\bm p))$, where 
\begin{equation}\label{eq:prox-psi}
	\prox_{\theta\psi}(\bm p) = (p\odot\tau_\theta(p,q),q\odot\tau_\theta(p,q)).
\end{equation}
According to \cite[Chapter 7]{Facchinei2006}, $f$ is strongly semismooth and so is the nonlinear mapping $F_k(\cdot)$ defined by \cref{eq:Fk-rof}.
Moreover, a direct computation shows that $P_{\theta}(X)\in\partial \prox_{\theta f}(X) $ where 
\begin{equation}\label{eq:P_theta-X}
	P_{\theta}(X): = \begin{pmatrix}
		\frac{1}{1+\rho\theta}I&O\\
		O&T\\
	\end{pmatrix}\quad\text{with}\quad T = \begin{pmatrix}
		{\rm diag}(\tau_{11})&{\rm diag}(\tau_{12})\\
		{\rm diag}(\tau_{21})&{\rm diag}(\tau_{22})\\
	\end{pmatrix}.
\end{equation}
In \cref{eq:P_theta-X}, $T$ is block diagonal and $	\mathcal T_\theta(p_i,q_i) = \begin{pmatrix}
	(	\tau_{11})_i&(\tau_{12})_i\\
	(	\tau_{21})_i&(\tau_{22})_i\\
\end{pmatrix}$ for all $1\leqslant i\leqslant mn$. For $Y =(	s,\bm \lambda)\in\R^{3mn}$, it is not hard to find that $f^*(Y) = \frac{1}{2\rho}\nm{s}^2 + \dual{s,\xi}+\psi^*(\bm{\lambda})$, and thus the function \cref{eq:cal-Fk} becomes
\[
\begin{aligned}
	\mathcal F_k(\bm{\lambda})
	= {}&
	\frac{\beta_{k+1}}{2}\nm{\bm \lambda}^2-\dual{Z_k,\bm\lambda}
	+f^*(Y_k(\bm{\lambda})) +\frac{1}{2\theta_k}\nm{\prox_{\theta_k f}[Y_k(\bm{\lambda})]}^2,
\end{aligned}
\]
where $Z_k  ={} \beta_{k+1}\left(\bm \lambda_k-\beta_k^{-1}\mathcal AX_k\right) $ and $Y_k(\bm{\lambda})= X_k-\theta_k\mathcal A^{\top}\bm\lambda$.

As motivated by the first example (cf. Figure \ref{fig:Semi-PDPG-l1-l2-warmup}), in line \ref{algo:wram} of Algorithm \ref{algo:Im-PD-ROF}, we consider a warm-up step to provide a reasonable initial guess $(X_0,\bm{\lambda}_0)$ and therefore enhance the performance.
Besides, in step \ref{eq:d-rof}, the linear SPD system has special sparse structure that $T_k$ is a $2\times 2$ block matrix with each block being diagonal (see \cref{eq:P_theta-X}) and 
\[
AA^\top = \begin{pmatrix}
	H_{11}&H_{12}\\ H_{12}^\top&H_{22}
\end{pmatrix}=\begin{pmatrix}
	I_n\otimes D_mD_m^\top& D_n^\top\otimes D_m\\
	D_m^\top\otimes D_n&		D_nD_n^\top\otimes I_m\\
\end{pmatrix},
\]
where $H_{11}$ is block diagonal and both $H_{12}$ and $H_{22}$ are block tridiagonal. Hence, we consider the incomplete Cholesky factorization (cf. \cite[Chapter 10]{saad_iterative_2003}) as a preconditioner and apply preconditioned CG to step \ref{eq:d-rof} to obtain an approximation with relative residual $\leqslant 10^{-8}$.
\begin{algorithm}[H]
	\caption{Inexact Im-PD method for the discrete ROF model \cref{eq:rof-dis-x-y-z}}
	\label{algo:Im-PD-ROF}
	\begin{algorithmic}[1] 
		\REQUIRE  $\beta_0>0,\,\nu = 0.2$ and $\delta = 0.9$.
		\STATE Problem setting: $\rho>0$ and $\xi\in\R^{mn}$.
		\STATE Tolerances: $\mathtt{KKT\_Tol} = 10^{-6}$ and 
		$\mathtt{SsN\_Tol} =10^{-8}$.
		\STATE Perform a warm-up step to obtain: $X_0 = (u_0,\bm p_0)
		\in\R^{3mn}$ and $\bm\lambda_0\in\R^{2mn}$.	\label{algo:wram}
		\FOR{$k=0,1,\ldots$}
		\STATE Choose the step size $\alpha_k>0$ and update $\displaystyle \beta_{k+1}= \beta_k/(1+\alpha_k)$.
		\STATE Set $\displaystyle 		\theta_{k}={}\alpha_k/\beta_{k}$ and $\rho_k =1/( 1+\rho\theta_k)$.
		\STATE Set $\displaystyle Z_k  ={} \beta_{k+1}\left(\bm \lambda_k-\beta_k^{-1}\mathcal AX_k\right)$.
		\STATE Solve $\bm \lambda_{k+1}$ from the nonlinear equation
		\begin{equation}\label{eq:Fk-rof}
			F_{k}(\bm \lambda):=	\beta_{k+1}\bm \lambda -\mathcal A\proxi_{\theta_k f}\left(X_k-\theta_k\mathcal A^{\top}\bm \lambda\right)- Z_k=0
		\end{equation}
		via the following SsN iteration with the initial guess $\bm\lambda = \bm\lambda_k$:
		\WHILE[SsN iteration]{$\nm{F_k(\bm{\lambda})}>\mathtt{SsN\_Tol}$}
		\STATE Compute $Y_k = X_k - \theta_k\mathcal A^\top\bm\lambda$.
		\STATE Find $		P_{k}(Y_k) = \begin{pmatrix}
			\rho_kI&O\\O&T_k
		\end{pmatrix}\in\partial \prox_{\theta_k f}(Y_k)$ via \cref{eq:P_theta-X}.
		\STATE Compute $JF_k(\bm{\lambda}) =\beta_{k+1}I+\theta_kT_k +\rho_k \theta_kAA^\top$.
		\STATE Solve $JF_k(\bm{\lambda})\bm d = -F_k(\bm{\lambda})$ approximately via preconditioned CG.\label{eq:d-rof}
		\STATE Find the smallest integer $r\in\mathbb N_+$ such that $		\mathcal F_k(\bm\lambda+\delta^r\bm d)\leqslant \mathcal F_k(\bm \lambda)+\nu\delta^r\dual{F_k(\bm \lambda),\bm d}$.
		\STATE Update $\bm{\lambda} = \bm\lambda+\delta^r\bm d$.
		\ENDWHILE		
		\STATE Update $\bm{\lambda}_{k+1} = \bm{\lambda}$ and $X_{k+1}	 = \prox_{\theta_k f}\left(X_k-\theta_k \mathcal A^{\top}\bm\lambda_{k+1}\right)$.
		\IF{${\rm Res}(k)\leqslant \mathtt{KKT\_Tol}$}
		\STATE {\bf break}
		\ENDIF		
		\ENDFOR
	\end{algorithmic}
\end{algorithm}
\subsubsection{Numerical results}
We adopt four benchmark images from the literature: $\mathtt{barb}$, $\mathtt{boat}$, $\mathtt{cameraman}$ and $\mathtt{lena}$. These images are noised with standard normal distribution. Note that both \cref{eq:rof-dis-minmax-u,eq:rof-dis-x-y-z} admit the same optimality condition 
\[
\left\{
\begin{aligned}
	0=	{}&\rho(u^*-\xi) - A^\top\bm \lambda^*\\
	0\in		{}&\bm \lambda^*+\partial \psi(\bm p^*)\\
	0=		{}&	\bm p^* -Au^* \\
\end{aligned}
\right.\quad\Longleftrightarrow\quad
\left\{
\begin{aligned}
	0=	{}&\rho(u^*-\xi) - A^\top\bm \lambda^*\\
	0=	{}&
	\bm p^* - \prox_{ \psi}(\bm p^*-\bm \lambda^*)\\
	0=			{}&	\bm p^* -Au^* =0
\end{aligned}
\right..
\]
Hence, we consider the stopping criterion:
\begin{equation}\label{eq:res-rof}
	{\rm Res}(k) := \max\left\{{\rm Res}(u_k),\,{\rm Res}(\bm p_k),\,{\rm Res}(\bm \lambda_k)\right\}\leqslant \mathtt{KKT\_Tol} = 10^{-6},
\end{equation}
where the relative KKT residuals are defined by 
\[\small
{\rm Res}(u_k):= \frac{\nm{\rho(u_k-\xi) - A^\top\bm \lambda_k}}{1+\nm{\xi}},\,
{\rm Res}(\bm p_k):=	\frac{\nm{\bm p_k - \prox_{ \psi}(\bm p_k-\bm \lambda_k)}}{1+\nm{\bm p_k}}
\,\text{and}\,
{\rm Res}(\bm \lambda_k):=	\frac{\nm{\bm p_k - Au_k}}{1+\nm{\bm p_k}}.
\]
For all methods, the maximal iteration number is $k_{\max} = 1e5$. For inexact Im-PD (i.e. Algorithm \ref{algo:Im-PD-ROF}), we run the A-ADMM with 50 steps to obtain an initial guess $(u_0,\bm p_0,\bm{\lambda}_0)$ with $ \max\left\{{\rm Res}(u_0),\,{\rm Res}(\bm p_0),\,{\rm Res}(\bm \lambda_0)\right\}\approx 10^{-2}$ and choose the step size $\alpha_k = 1+\sigma$ where $\sigma$ obeys the uniform distribution on $[0,1]$. Then by \cref{thm:conv-im}, we have the linear rate $\varrho^k$ with $\varrho = \mathbb E[\frac{1}{1+\alpha_k}]=\int_{0}^{1}\frac{1}{2+\sigma}\dd\sigma=\ln3/2$ and the required iteration number for \cref{eq:res-rof} is about $k^*=-4\ln 10/\ln\varrho\approx 10$.
\renewcommand\arraystretch{1.2}
\begin{table}[H]
	\centering
	\caption{Performances of Algorithm \ref{algo:Im-PD-ROF}, PDHG \cref{eq:PDHG} and 
		A-ADMM \cref{eq:A-ADMM} for solving \cref{eq:rof-dis}. }
	\label{tab:rof}
	\small\setlength{\tabcolsep}{1.5pt}
	\begin{tabular}{ccccccccccccccccc}
		\toprule
		&&& &&  &\multicolumn{4}{c}{Inexact Im-PD (Algorithm \ref{algo:Im-PD-ROF})}&\phantom{a} &  \multicolumn{2}{c}{A-ADMM \cref{eq:A-ADMM}}&\phantom{a} &  \multicolumn{3}{c}{PDHG \cref{eq:PDHG}}\\
		\cmidrule{7-10}			\cmidrule{12-13} \cmidrule{15-17} 
		&&$m=n$ & &$\rho$ &&its& SsN&warm-up(sec)&time(sec)&    & its   & time(sec)&    & its  &Res$(k_{\max})$  & time(sec)\\						
		\midrule
		\multirow{2}{*}{ $\mathtt{barb}$}			
		&&  \multirow{2}{*}{512}  &&  50  &&  10  & 182  & 47.11 &  830.53 &&  1572  &  1497.40  &&  \multirow{2}{*}{$10^5$} &  5.15e-06 &  5169.55 \\ 
		&&    &&  150  &&  10  & 141  & 41.76 &  568.34 &&  3445  &  3192.33  &&    &  3.59e-06 &  5213.23 \\ 
		\midrule
		\multirow{2}{*}{ $\mathtt{boat}$}		
		&&  \multirow{2}{*}{512}  &&  40  &&  9  & 84  & 54.53 &  457.05 &&  1300  &  1145.84  &&  \multirow{2}{*}{$10^5$} &  5.49e-06 &  5228.70 \\ 
		&&    &&  180  &&  10  & 141  & 44.34 &  611.90 &&  3866  &  3316.70  &&     &  3.42e-06 &  5223.88 \\ 
		\midrule
		\multirow{2}{*}{ $\mathtt{cameraman}$}		
		&&  	\multirow{2}{*}{256}   &&  20  &&  7  & 52  & 8.48 &  78.91 &&  724  &  124.58  &&  \multirow{2}{*}{$10^5$}&  7.77e-06 &  1299.35 \\ 
		&&    &&  100  &&  10  & 81  & 8.36 &  70.26 &&  2575  &  448.94  &&  &  4.10e-06 &  1262.76 \\ 
		\midrule
		\multirow{2}{*}{ $\mathtt{lena}$}	
		&&  \multirow{2}{*}{256}   &&  50  &&  9  & 111  & 8.55 &  117.86 &&  1554  &  288.23  &&   \multirow{2}{*}{$10^5$}&  5.29e-06 &  1229.09 \\ 
		&&    &&  200  &&  11  & 157  & 8.47 &  158.41 &&  4099  &  758.60  &&     &  3.55e-06 &  1210.58 \\ 
		\bottomrule
	\end{tabular}
\end{table}
Computational results are summarized in \cref{tab:rof}, including the number of iterations ({\bf its}) and running time ({\bf time}). For Inexact Im-PD, we also report the total number of SsN iterations ({\bf SsN}) and the time used for initialization ({\bf warm-up}). For all cases, PDHG has not achieved the tolerance \cref{eq:res-rof} within the maximal iteration number $k_{\max} = 10^5$, and we also record the KKT residual Res$(k_{\max})$ at the last iterate. As we can see, Algorithm \ref{algo:Im-PD-ROF} outperforms much better than other two methods and the total iteration number is almost $10$, as expected above. Particularly, we observe that A-ADMM is more efficient than PDHG. 

Moreover, in Figure \ref{fig:Im-PD-rof-pcg}, we plot the averaged PCG iteration number of Algorithm \ref{algo:Im-PD-ROF} for all cases. Similar as before (cf. Figure \ref{fig:Semi-PDPG-l1-l2-pcg}), it increases along with the iteration. Therefore, this deserves further study for more robust and efficient linear solvers such as algebraic multilevel methods \cite{lee_robust_2007,xu_algebraic_2017}. 
\section{Concluding Remarks}
\label{sec:conclu}
In this work, we introduce a novel dynamical system, called primal-dual flow, for 
solving affine constrained convex optimization. The current model is a modification of the standard saddle-point dynamics. In continuous level, exponential decay of a tailored Lyapunov function is established. Then, in discrete level, primal-dual type algorithms are obtained from proper time discretizations of the 
presented primal-dual flow and nonergodic convergence rates 
are established via a unified discrete Lyapunov function.

The proposed methods adopt dynamically changing parameters and the subproblem with respect to the multiplier is solved by the semi-smooth Newton iteration. This can be quite efficient provided that the problem has nice properties such as semi-smoothness and sparsity, as showed by numerical results of the $l_1$-$l_2$ problem and the total-variation based denoising model.

To the end, we list several ongoing works. First, well-posedness (existence and uniqueness) of the primal-dual flow  system \cref{eq:pdf-x-non} is an interesting topic. Also, the exponential decay property \cref{eq:pdf-sys-EG} and weak convergence of the trajectory under general nonsmooth setting deserves future investigations. Besides, rigorous convergence rate analysis with inexact computation and restart technique requires further attentions.
\begin{figure}[H]
	\centering
	\includegraphics[width=450pt,height=380pt]{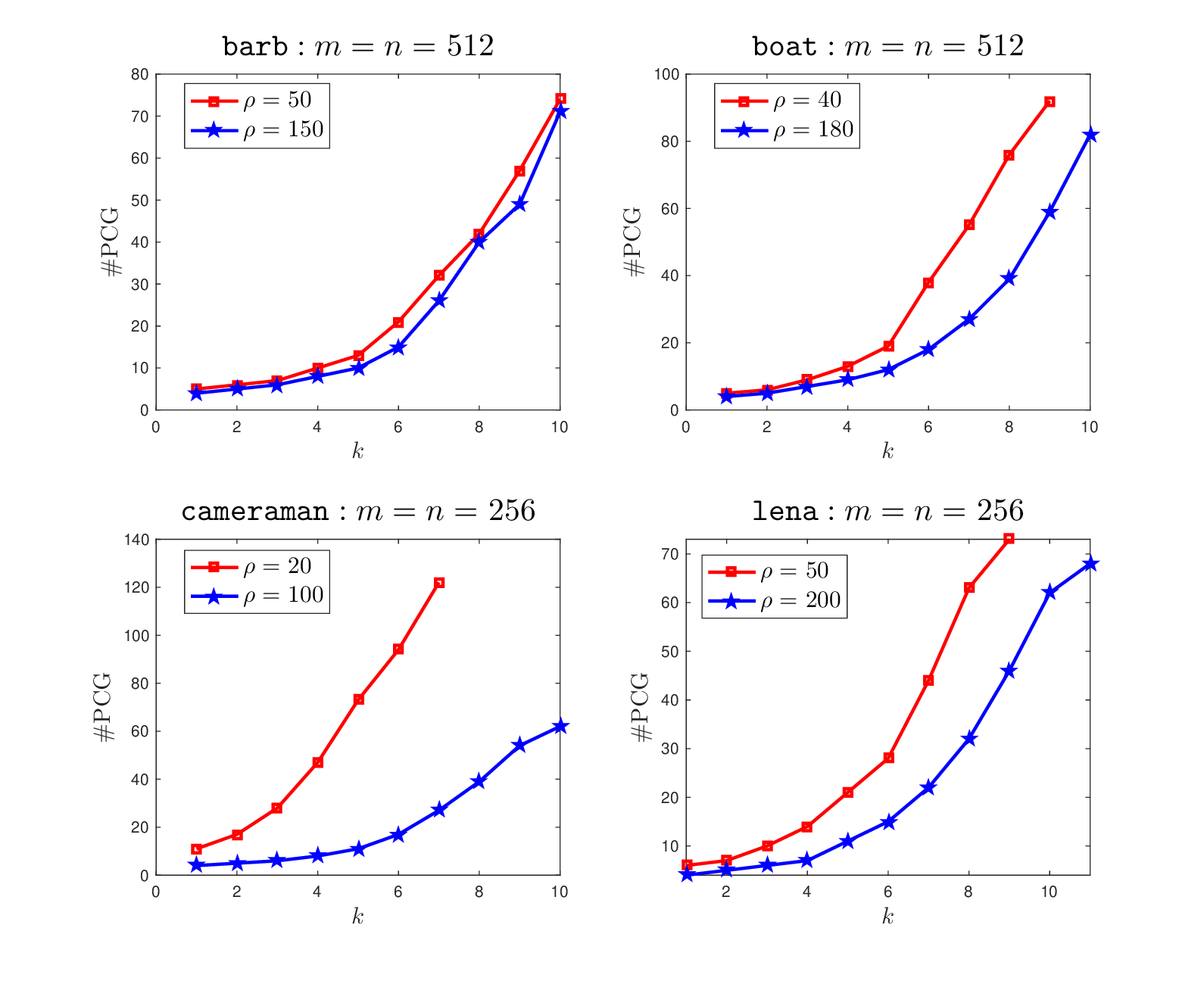}
	\caption{Averaged PCG iterations of Algorithm \ref{algo:Im-PD-ROF} for solving \cref{eq:rof-dis} with different noised input images and regularization parameters.}
	\label{fig:Im-PD-rof-pcg}
\end{figure}
\section*{Acknowledgments}
This work was supported by the NSFC project 11625101. 
The author would like to thank Professor Jun Hu for useful comments and advices. Besides, the author want to thank the two anonymous reviewers, as the manuscript was greatly benefit from their invaluable suggestions. 

\appendix
\section{An Over-Relaxation Perspective}
\label{app:pdhg}
In Section \ref{sec:pd-flow}, we introduced our primal-dual flow by adding the extra term $x'$, which is motivated from the disappointing estimate in \cref{lem:pdf-try-dE-G} and leads to the desired exponential decay, and later in Section \ref{sec:simple-model}, we provided an equilibrium illustration to show further the positive effects of this correction. 

To better understand the modification from the saddle-point system \cref{eq:sp} to our new model \cref{eq:pdf-x}, in this appendix, by using the PPA-like interpretation \cite{he_convergence_2012}, we give a discrete over-relaxation perspective, which indicates somewhat subtle connection with the hidden symmetrization from the Arrow--Hurwicz algorithm \cite{zhu_ecient_2008} to the PDHG method \cite{chambolle_first-order_2011}. We hope this provides a more reasonably intrinsic explanation. 

The Arrow--Hurwicz algorithm can be applied to \cref{eq:min-f-Ax-b} and reads as 
\begin{equation}\label{eq:pdhg-0}
	\left\{
	\begin{aligned}
		x_{k+1}= {}&\mathop{\argmin}_{x\in\R^n}\left\{\mathcal L(x,\lambda_k)+\frac{1}{2r}\nm{x-x_k}^2\right\},\\
		\lambda_{k+1}= {}&\mathop{\argmax}_{\lambda\in\R^m}\left\{\mathcal L(x_{k+1},\lambda)-\frac{1}{2\tau}\nm{\lambda-\lambda_k}^2\right\},
	\end{aligned}
	\right.
\end{equation}
with step sizes $r,\,\tau>0$. It also corresponds to a semi-implicit discretization for \cref{eq:sp}:
\begin{equation}\label{eq:semi}
	\left\{
	\begin{aligned}
		\frac{x_{k+1}-x_k}{r}\in {}&-\partial_x\mathcal L(x_{k+1},\lambda_k),\\
		\frac{		\lambda_{k+1}-		\lambda_{k}}{\tau}= {}&\nabla_\lambda\mathcal L(x_{k+1},\lambda_{k+1}).
	\end{aligned}
	\right.
\end{equation}

Following \cite{he_convergence_2012} and \cite[Chapter 8]{clason_nonsmooth_2020}, we use the PPA-like interpretation to demonstrate the lack of symmetry of the Arrow--Hurwicz algorithm. 
Introduce
\[
\begin{aligned}
	\small
	Z = \begin{pmatrix}
		x\\\lambda
	\end{pmatrix},\quad 
	M(Z) = \begin{pmatrix}
		\partial f(x) + A^\top\lambda\\
		b-	Ax
	\end{pmatrix}
	\quad\text{and}\quad
	Q = \begin{pmatrix}
		I/r & -A^\top\\ O& I/\tau
	\end{pmatrix},
\end{aligned}
\]
where the maximally monotone operator $M$ has been defined in \cref{eq:M}.
We then have the variational inequality characterization for \cref{eq:pdhg-0} (or \cref{eq:semi}):
\[
\begin{aligned}
	\dual{Q(Z_{k+1}-Z_k)+M(Z_{k+1}),Z-Z_{k+1}}
	\geqslant 0\quad\forall\,Z\in\R^{n+m}.
\end{aligned}
\]
Taking $Z = Z^*\in\Omega^*$ and utilizing the fact: $	0\in M(Z^*)$, we find that
\begin{equation}\label{eq:est-0}
	\begin{aligned} 
		{}&
		\frac{1}{2}\nm{Z_{k+1}-Z^*}_Q^2-\frac{1}{2}\nm{Z_{k}-Z^*}_Q^2 \\
		= {}&\dual{Q(Z_{k+1}-Z_k),Z_{k+1}-Z^*}  -\frac{1}{2}\nm{Z_{k+1}-Z_k}_Q^2
		+\frac{1}{2}\big\langle(Q^\top-Q)(Z_{k+1}-Z^*),Z_k-Z^*\big\rangle\\
		\leqslant {}&\underbrace{-\dual{M(Z_{k+1}),Z_{k+1}-Z^*}}_{\leqslant 0} -\frac{1}{2}\nm{Z_{k+1}-Z_k}_Q^2
		+\frac{1}{2}\big\langle(Q^\top-Q)(Z_{k+1}-Z^*),Z_k-Z^*\big\rangle\\	
		\leqslant {}	{}&-\frac{1}{2}\nm{Z_{k+1}-Z_k}_Q^2 +\frac{1}{2}\big\langle\underbrace{(Q^\top-Q)}_{\neq 0}(Z_{k+1}-Z^*),Z_k-Z^*\big\rangle.
	\end{aligned}
\end{equation}
As $Q$ is not symmetric, the last term makes it hard to obtain the descent estimate,
and what's even worse, the scheme \eqref{eq:pdhg-0} is not necessarily convergent \cite{he_algorithmic_2017}.

The PDHG method of Chambolle and Pock introduces a parameter $\theta\in[0,1]$ and becomes
\begin{equation}\label{eq:pdhg}
	\left\{
	\begin{aligned}
		x_{k+1}= {}&\mathop{\argmin}_{x\in\R^n}\left\{\mathcal L(x,\lambda_k)+\frac{1}{2r}\nm{x-x_k}^2\right\},\\
		\lambda_{k+1}= {}&\mathop{\argmax}_{\lambda\in\R^m}\left\{\mathcal L(x_{k+1}+\theta(x_{k+1}-x_k),\lambda)-\frac{1}{2\tau}\nm{\lambda-\lambda_k}^2\right\},
	\end{aligned}
	\right.
\end{equation}
which is also equivalent to
\begin{equation}\label{eq:pdhg-dis}
	\left\{
	\begin{aligned}
		\frac{x_{k+1}-x_k}{r}\in{}&-\partial_x\mathcal L(x_{k+1},\lambda_k),\\
		\frac{		\lambda_{k+1}-		\lambda_{k}}{\tau}= {}&\nabla_\lambda\mathcal L(x_{k+1}+\theta(x_{k+1}-x_k),\lambda_{k+1}).
	\end{aligned}
	\right.
\end{equation}
Comparing  this with the previous discretization \eqref{eq:semi}, we observe the additional extrapolation term $x_{k+1}-x_k$. For the case $\theta=1$, we have ergodic convergence rate $\mathcal O(1/k)$ under the condition $r\tau \nm{A}^2<1$. Moreover, applying the above PPA-like framework to the PDHG method (with $\theta=1$), one observes that the estimate \cref{eq:est-0} is now improved to
\[
\frac{1}{2}\nm{Z_{k+1}-Z^*}_{\widehat{Q}}^2-\frac{1}{2}\nm{Z_{k}-Z^*}_{\widehat{Q}}^2
\leqslant {}	{}-\frac{1}{2}\nm{Z_{k+1}-Z_k}_{\widehat{Q}}^2\quad\text{with}\quad 	{\widehat{Q}} = \begin{pmatrix}
	I/r & -A^\top\\ -A& I/\tau
\end{pmatrix},
\]
where $\widehat{Q}$ is a symmetrization of $Q$, due to the over-relaxation $x_{k+1}-x_k$. 

Surprisingly, instead of the original saddle-point system \cref{eq:sp}, the PDHG method \cref{eq:pdhg} is more likely a time discretization (cf. \cref{eq:pdhg-dis}) for the modified model 
\[
\left\{
\begin{aligned}
	{}&		x'  =-\nabla_x \mathcal L(x,\lambda),\\
	{}&\lambda'  ={}\nabla_\lambda \mathcal L(x+x',\lambda),
\end{aligned}
\right.
\]
which differs from our primal-dual flow \cref{eq:pdf-x} only in the time scaling parameters. In conclusion, the extra derivative $x'$ in $\nabla_\lambda \mathcal L(x+x',\lambda)$ corresponds to discrete over-relaxation $x_{k+1}-x_k$ in PDHG, which possibly brings hidden symmetrization.

\end{document}